\documentclass{article}
\usepackage{graphicx} 

\usepackage{tikz}
\usetikzlibrary{matrix}
\usetikzlibrary{arrows}
\usepackage{enumerate}
\usepackage{amsmath}
\usepackage{amssymb,latexsym}
\usepackage{amsthm}
\usepackage{tipa}
\usepackage{anysize}
\usepackage{graphicx}
\usepackage{url}
\usepackage{color}
\usepackage{cite}
\usepackage{float}
\usepackage{latexsym}
\usepackage{amsthm}
\usepackage{amssymb}
\usepackage[utf8]{inputenc}
\usepackage{mathrsfs}
\usepackage[english]{babel}
\usepackage{setspace}
\usepackage{listings}
\usepackage{multirow}
\usepackage{algorithm}
\usepackage{framed}
\usepackage{algpseudocode}
\definecolor{citec}{HTML}{324FDA} 
\definecolor{linkc}{HTML}{A0111A}
\definecolor{urlc}{HTML}{b55c87}
\usepackage[
    colorlinks,
    citecolor=citec,
    linkcolor=linkc,
    urlcolor=urlc,
    bookmarks = true,
    breaklinks,
]{hyperref}
\usepackage{cleveref}

\usepackage[margin=2.6cm]{geometry}
\usepackage{algorithm}
\usepackage{algpseudocode}

\newcommand{\RS}{{\mathrm{RS}}}
\newcommand{\cM}{{\mathcal M}}
\newcommand{\cC}{{\mathcal C}}
\newcommand{\cD}{{\mathcal D}}

\newcommand{\cG}{{\mathcal G}}

\newcommand{\F}{{\mathbb F}}

\newcommand{\rk}{{\operatorname{rk}}}

\newcommand{\aaa}{\underline{\alpha}}

\newcommand{\la}{\langle}
\newcommand{\ra}{\rangle}

\newcommand\qbin[3]{\left[\begin{matrix} #1 \\ #2 \end{matrix}\right]_{#3}}

\newcommand{\Fq}{\mathbb{F}_q}

\newcommand{\rc}[1]{{\color{blue}[Roni: #1]}}

\theoremstyle{definition}

\newtheorem{theorem}{Theorem}[section]
\newtheorem{lemma}[theorem]{Lemma}
\newtheorem{corollary}[theorem]{Corollary}
\newtheorem{definition}[theorem]{Definition}
\newtheorem{proposition}[theorem]{Proposition}

\newtheorem{question}[theorem]{Question}

\newtheorem{remark}[theorem]{Remark}

\newtheorem{claim}[theorem]{Claim}

\title{Sunflowers of Reed--Solomon Codes}

\usepackage{authblk}

\author[1]{Roni Con}
\affil[1]{Tel Aviv University, Israel}

\author[2]{Anina Gruica}
\affil[2]{Technical University of Denmark, Denmark}

\author[2]{Maria Montanucci}

\author[3]{Ferdinando Zullo}
\affil[3]{Università degli Studi della Campania ``Luigi Vanvitelli'', Italy}

\date{}

\begin{document}

\maketitle

\begin{abstract}
We introduce and study \emph{Reed--Solomon sunflowers}, namely families of Reed--Solomon codes whose pairwise intersections are all equal to the same fixed subspace. This notion lies at the intersection of extremal subspace combinatorics and coding theory: it can be viewed as a structured version of the sunflower problem in the Grassmannian, and it naturally produces constant-dimension subspace codes with prescribed minimum distance.
We focus mainly on the case in which the center is the one-dimensional space generated by the all-one vector. We give an algebraic criterion, expressed in terms of generalized $V$-matrices, ensuring that a family of Reed--Solomon codes forms such a sunflower. We then study the size of these families through counting and constructions. In dimension two, we show that all distinct Reed--Solomon codes form a sunflower and determine its size by counting Reed--Solomon codes up to affine equivalence of their evaluation vectors.
For fixed dimension $k\geq3$ and length $\ell\geq2k-1$, we give an explicit recursive construction with $\Omega_{k,\ell}(q^{\lfloor\ell/(2k-1)\rfloor})$ petals and a greedy existence argument with $\Omega_{k,\ell}(q^{\ell-2k+2})$ petals as $q\to\infty$.
We also apply the greedy argument to obtain families of $[\ell,k]_q$ MDS codes of size $\Omega_{k,\ell}(q^{2(\ell-2k+2)})$, whose pairwise intersections have dimension at most one but need not be equal.
\end{abstract}

\noindent\\\\ \textbf{Keywords:} Reed--Solomon sunflowers; subspace sunflowers; constant-dimension codes; Grassmannian codes; MDS codes.\\

\noindent\textbf{MSC2020:} {94B05, 94B27, 05B25, 05D05, 51E20, 11T71.}

\newpage
\tableofcontents
\newpage

\section{Introduction}

The study of large families of sets with prescribed intersection patterns is a central theme in extremal combinatorics. A classical starting point is the Erdős--Ko--Rado theorem, which determines the largest size of an intersecting family of $k$-subsets of an $n$-set, under suitable assumptions on the parameters \cite{erdos1961intersection}. This result initiated a broad line of research on extremal set systems, including $t$-intersecting families, complete intersection theorems, and related stability questions; see, for instance, \cite{ahlswede1997complete,wilson1984intersecting}.

A closely related notion is that of a sunflower, or $\Delta$-system, introduced by Erdős and Rado \cite{erdos1960intersection}. A sunflower is a family of sets whose pairwise intersections are all equal to the same fixed set, called the center. Sunflowers may be viewed as highly structured intersecting families: while the Erdős--Ko--Rado problem asks for large families in which all pairwise intersections are nonempty, the sunflower condition prescribes the intersection itself. This stronger requirement has led to a rich theory with connections to extremal combinatorics, Ramsey theory, and theoretical computer science.

A natural linear analogue of these questions arises when one replaces subsets by subspaces of a finite-dimensional vector space. Let $\mathcal{G}_q(k,n)$ denote the Grassmannian of all $k$-dimensional subspaces of $\mathbb{F}_q^n$. A family $\mathcal{C}\subseteq \mathcal{G}_q(k,n)$ is called intersecting if any two of its elements have nontrivial intersection. More generally, one can ask for families in which the intersection of any two members has dimension at least $t$. This is the $q$-analogue of the Erdős--Ko--Rado problem and has been widely studied; see, for example, \cite{hsieh1975intersection,frankl1986erdos,blokhuis2022sunflower,otal2026set,ihringer2026erdHos,bartoli2021improvement}. In this setting, extremal families are often given by all $k$-subspaces containing a fixed $t$-dimensional subspace, which is the vector-space analogue of a star.

Sunflowers in the Grassmannian form a particularly important class of such families. A subspace sunflower is a family of $k$-dimensional subspaces whose pairwise intersections are all equal to the same fixed subspace. In other words, a sunflower is not only $t$-intersecting, but has a prescribed common intersection. These objects have also appeared naturally in the theory of subspace codes. Indeed, when subspaces are used as codewords and the distance is the subspace distance
\[
d(U,V)=2\bigl(k-\dim_{\mathbb{F}_q}(U\cap V)\bigr),
\]
the dimension of the pairwise intersection directly determines the distance between codewords. Thus, a sunflower with center of dimension $c$ gives a constant-dimension subspace code with minimum distance $2(k-c)$. For this reason, sunflowers have been investigated as structured examples of equidistant or nearly equidistant subspace codes, and their relation with partial spreads, designs, and extremal configurations has been studied in several works; see, for instance, \cite{etzion2011error,etzion2015equidistant}.

The general problem of constructing large subspace codes with prescribed intersection behavior is motivated in part by random linear network coding. In the noncoherent model introduced by Koetter and Kschischang \cite{koetter2008coding}, transmitted information is represented by subspaces rather than by individual vectors. Constant-dimension codes in the Grassmannian therefore play the role of error-correcting codes for the operator channel. From this point of view, a sunflower is an especially transparent configuration: all codewords have the same dimension, and all pairwise distances are controlled by the dimension of the common center. However, arbitrary subspace sunflowers do not necessarily carry additional algebraic structure that can be exploited for encoding, recognition, or decoding.

In this paper we study sunflowers whose petals are \emph{Reed--Solomon codes} (see \Cref{def:RS-code}). Specifically, we consider families
\[
\mathcal{S}=\{\mathsf{RS}_{\ell,k}(\underline{\alpha}_1),\ldots,
\mathsf{RS}_{\ell,k}(\underline{\alpha}_s)\}
\]
of $k$-dimensional Reed--Solomon codes (RS codes, for short) in $\mathbb{F}_q^\ell$, with the property that every pair of distinct codes in the family has the same fixed subspace as their intersection. We call such a family a \emph{Reed--Solomon sunflower} (RS sunflower, for short). The main case considered in this work is the one-dimensional center
\[
\langle \underline{1}\rangle_{\mathbb{F}_q},
\qquad 
\underline{1}=(1,\ldots,1),
\]
which is naturally contained in every RS code, since it is obtained by evaluating constant polynomials. Thus we are interested in families satisfying
\[
\mathsf{RS}_{\ell,k}(\underline{\alpha}_i)
\cap
\mathsf{RS}_{\ell,k}(\underline{\alpha}_j)
=
\langle \underline{1}\rangle_{\mathbb{F}_q}
\qquad
\text{for all } i\ne j.
\]
Such a family is simultaneously a structured object in extremal subspace combinatorics and a constant-dimension subspace code with minimum distance $2k-2$.

The additional Reed--Solomon structure distinguishes our problem from the general theory of subspace sunflowers. From the combinatorial point of view, we are not asking merely for large sunflowers in the Grassmannian, but for large sunflowers whose petals belong to a very special algebraic class of MDS (maximum distance separable) codes. From the coding-theoretic point of view, this restriction is useful because RS codes admit strong algebraic tools, including explicit descriptions by evaluation points, efficient encoding and decoding algorithms, and characteristic behavior under the Schur product. These features make RS sunflowers suitable not only for existence and counting questions, but also for algorithmic applications. 

More broadly, RS codes are among the most widely
used families of error-correcting codes in both theory and practice, with numerous applications across diverse domains. Examples include distributed storage systems, QR codes, secret sharing and cryptography more generally, computational complexity, and others.  
We expect that RS sunflowers may also prove useful beyond the immediate application in subspace codes.

Our first contribution is an algebraic criterion ensuring that a family of RS codes forms a sunflower with center $\langle\underline{1}\rangle_{\mathbb{F}_q}$. The criterion is formulated in terms of a generalized $V$-matrix associated with two evaluation vectors. Full rank of this matrix rules out nonconstant common codewords and therefore guarantees that the two corresponding RS codes intersect only in the space of constant vectors. This gives a concrete way to translate the sunflower condition into rank conditions on evaluation points.

We then study the size of RS sunflowers through counting and constructions. In dimension $k=2$, the situation is especially simple: any two distinct two-dimensional RS codes already intersect exactly in $\langle\underline{1}\rangle_{\mathbb{F}_q}$. We therefore obtain the exact size of the corresponding sunflower by counting RS codes up to affine equivalence of their evaluation vectors. For fixed $k\geq3$ and $\ell\geq2k-1$, our explicit recursive construction, based on the generalized $V$-matrix criterion, yields $\Omega_{k,\ell}(q^{\lfloor\ell/(2k-1)\rfloor})$ petals as $q\to\infty$.

We also obtain a greedy existence bound. Specifically, we show that a fixed $[\ell,k]_q$ RS code intersects at most $2(k-1)^{\ell-2}q^{2k-4}$ RS codes in dimension at least two. Starting with all distinct RS codes, we repeatedly select a code and discard all codes whose intersection with it has dimension at least two. Combining the resulting bound with our exact count of RS codes yields RS sunflowers with $\Omega_{k,\ell}(q^{\ell-2k+2})$ petals.

For $k\geq3$, the recursive and greedy RS bounds have the same exponent when $\ell=2k-1$, and the greedy bound has the larger exponent when $\ell\geq2k$. Applying the same selection argument to MDS codes gives families of size $\Omega_{k,\ell}(q^{2(\ell-2k+2)})$ with minimum subspace distance at least $2k-2$. These families need not share a common center.

The paper is organized as follows. In Section~2 we introduce RS sunflowers and prove the generalized $V$-matrix criterion. In Section~3 we count RS codes, discuss the case $k=2$, show a connection between RS codes that can correct deletions and RS sunflowers, and give explicit recursive constructions for general $k$. We also derive greedy existence bounds for MDS code families and RS sunflowers and compare them with the recursive construction. In Section~4, we discuss applications to random linear network coding and explain how the Reed--Solomon structure can be exploited algorithmically through Schur-product distinguishers and reconstruction methods. Finally, in Section~5, we conclude with open questions for future research.

\section{Reed--Solomon sunflowers}

In this section, we introduce the main object of the paper. We first give the relevant definitions and then formulate the main question studied here. Throughout, for a prime power $q$, we denote by $\F_q$ the finite field of $q$ elements, and we let $1 \le c < k \le \ell$ be integers.
Throughout the paper, for a positive integer $m$, we write $[m]:=\{1,\ldots,m\}$.

\begin{definition}[Reed--Solomon code] \label{def:RS-code}
    Let $\alpha_1,\ldots,\alpha_n \in \Fq$ be distinct points in a finite field $\Fq$ of order $q\geq n$.
    For $k\leq n$, the $[n,k]_q$ \textbf{Reed--Solomon code} defined by the evaluation vector $\underline{\alpha}=(\alpha_1,\ldots,\alpha_n)$ is the set of codewords
    \[ \mathsf{RS}_{n,k}(\underline{\alpha})=\{ (f(\alpha_1),\ldots,f(\alpha_n)) \colon f \in \Fq[x]_{<k} \}, \]
    where $\Fq[x]_{<k}$ denotes the set of polynomials in $\Fq[x]$ with degree smaller than $k$.
\end{definition}

We are interested in families of RS codes which, when viewed as subspaces of $\mathbb{F}_q^\ell$, form a \textit{sunflower}: all pairwise intersections are equal to the same fixed subspace.  This combines two types of structural properties.  On the one hand, RS codes have strong algebraic properties and admit efficient recognition and decoding algorithms.  On the other hand, the sunflower condition gives a \textit{constant-dimension subspace} code with controlled subspace distance.

\begin{definition}[Reed--Solomon sunflower]
Let $1\le c < k \le \ell \le q$, and let 
\[
\mathcal{S}=\{\RS_{\ell, k}(\underline{\alpha}_1),\ldots,\RS_{\ell, k}(\underline{\alpha}_s)\}
\]
be a family of $k$-dimensional RS codes in $\mathbb{F}_q^\ell$.
We say that $\mathcal{S}$ is an $(\ell,k,q;c)$ \textbf{Reed--Solomon sunflower} if there exists a fixed
$c$-dimensional subspace $E\subseteq \mathbb{F}_q^\ell$ such that
\[
\RS_{\ell, k}(\underline{\alpha}_i)\cap \RS_{\ell, k}(\underline{\alpha}_j)=E
\qquad \text{for all } i\ne j.
\]
The subspace $E$ is called the \emph{center} of the sunflower, and the codes
$\RS_{\ell, k}(\underline{\alpha}_i)$ are called its \emph{petals}.
\end{definition}

The set of all $k$-dimensional $\mathbb{F}_q$-subspaces of $\mathbb{F}_q^n$ is called the \textbf{Grassmannian} over $\mathbb{F}_q$ and is denoted by $\mathcal{G}_q(k,n)$. An $(\ell,k,q;c)$ RS sunflower is, in particular, a family of $k$-dimensional subspaces of $\mathbb{F}_q^\ell$. Hence it can also be viewed as a constant-dimension subspace code in $\mathcal{G}_q(k,\ell)$. Under the subspace distance
$$d(U,V)=2\bigl(k-\dim_{\mathbb{F}_q}(U\cap V)\bigr),$$
a sunflower with center dimension $c$ has minimum distance $2(k-c)$. Thus, in the case $c=1$, the sunflowers studied here give constant-dimension codes with minimum distance $2k-2$.

\begin{remark} \label{rem:sunflower-subspace-codes}
Sunflower codes have been extensively studied in the context of
constant-dimension codes.  If \(\mathcal S\subseteq \mathcal{G}_q(k,n)\) is a
\(t\)-intersecting sunflower with center \(\textrm{Cen}(\mathcal S)\), then, after
quotienting by \(\textrm{Cen}(\mathcal S)\), the petals form a \emph{partial
\((k-t)\)-spread} in a complementary \((n-t)\)-space, i.e. a family of \((k-t)\)-dimensional subspaces pairwise intersecting trivially in an  \((n-t)\)-space 
Conversely, every
partial \((k-t)\)-spread in \(\mathcal{G}_q(k-t,n-t)\) gives rise, by adding a fixed
\(t\)-dimensional center, to a \(t\)-intersecting sunflower in \(\mathcal{G}_q(k,n)\).
Thus, the construction of large sunflower codes is essentially equivalent to
the construction of large partial spreads.
Several constructions of sunflower codes therefore arise from known
constructions of partial spreads.  Classical spreads give optimal sunflower
codes whenever \(k-t\) divides \(n-t\) of size 
\[
\frac{q^{n-t}-1}{q^{k-t}-1},
\]
see \cite{segre1964teoria}.  For the remaining parameters, general
constructions go back to Beutelspacher~\cite{Beutelspacher1975}, and further
families can be obtained from lifted rank-metric codes, linkage constructions,
and echelon--Ferrers constructions; see, for instance,
\cite{koetter2008coding,etzion2011error}.
Systematic constructions and structural results for equidistant subspace
codes, including sunflower codes and their orthogonals, were later studied in
\cite{etzion2015equidistant,gorla2014partial}.  In particular, Etzion and Raviv \cite{etzion2015equidistant}
showed that, for large ambient dimension, the largest equidistant
constant-dimension codes are sunflowers, and that the size of the largest
sunflower is governed by the corresponding partial-spread problem.
\end{remark}

The basic problem we study is therefore the following: for fixed parameters $q,\ell,k$ and a prescribed center dimension $c$, how large can a family of $k$-dimensional RS codes in $\mathbb{F}_q^\ell$ be if all pairwise intersections are equal to the same $c$-dimensional subspace?  For the rest of this section we concentrate on the case $c=1$ and center $\langle(1,\ldots,1)\rangle_{\Fq}$. From now on, we denote by $\underline{1}$ the vector $(1,\dots,1)$. 
This center is naturally contained in every RS code, since it is obtained by evaluating constant polynomials. Formally, we are interested in the following question.

\begin{question} \label{question}
    For fixed parameters $q,\ell$, and $k$, what is the largest possible size of an $(\ell,k,q;1)$ RS sunflower? Furthermore, is it possible to explicitly construct sunflowers of this type whose size is (asymptotically) large?
\end{question}

Our main ingredient in tackling Question~\ref{question} is the following concrete algebraic criterion ensuring that two RS codes intersect only in this center. For this, we use an extension of the $V$-matrix considered in \cite[Section 2.1]{con2023reed}. 

\begin{definition}[Generalized $V$-matrix]\label{matrix} 
    For positive integers $\ell, k$ define the $\ell\times(2k-1)$ matrix
\[      V_{k,\ell}(\underline{X}, \underline{Y})=   V_{k,\ell}(X_1,X_2,\ldots,X_{\ell},Y_1,Y_2,\ldots,Y_{\ell})=\left(\begin{array}{ccccccc}1 & X_{1} & \cdots & X_{1}^{k-1} & Y_{1} & \cdots & Y_{1}^{k-1} \\1 & X_{2} & \cdots & X_{2}^{k-1} & Y_{2} & \cdots & Y_{2}^{k-1} \\ \vdots& \vdots & \ddots& \vdots & \vdots & \ddots & \vdots \\1 & X_{\ell} & \cdots & X_{\ell}^{k-1} & Y_{\ell} & \cdots & Y_{\ell}^{k-1}\end{array}\right),     \] in 
$\F_q[X_1,\dots,X_{\ell},Y_1,\dots,Y_{\ell}]^{\ell \times (2k-1)}$. We call $V_{k,\ell}(\underline{X}, \underline{Y})$
a \textbf{generalized $V$-matrix}. 
\end{definition}

The generalized $V$-matrix records the equations that would have to be satisfied by two polynomials of degree less than $k$ whose evaluations agree on the two prescribed evaluation vectors.  Full rank therefore rules out nonconstant common codewords and gives the following construction criterion.

\begin{theorem}\label{thm:constr}
Let $k$, $\ell$ and $s$ be positive integers with $\ell \ge 2k-1$. Let $\mathcal{A}=\{\underline{\alpha}_1, \ldots, \underline{\alpha}_s\}$ be a collection such that for every $i \in [s]$, we have $\underline{\alpha}_i = (\alpha_{i,1},\ldots,\alpha_{i,\ell}) \in \mathbb{F}_q^\ell$, and the entries $\alpha_{i,h}$ are pairwise distinct within each vector. \footnote{Note that two different evaluation vectors $\underline{\alpha}_i$ and $\underline{\alpha}_j$ may share some coordinates; the requirement is only that, inside any single evaluation vector, all coordinates are distinct.}
Moreover, assume that for all distinct $i,j \in [s]$, we have $\mathrm{rk}(V_{k,\ell}(\underline{\alpha}_i,\underline{\alpha}_j)) = 2k-1$. Then the set  
 \[
    \mathcal{C}=\lbrace \mathsf{RS}_{\ell,k}(\underline{\alpha}_{i}) \colon  i\in [s] \rbrace
    \]
is an $(\ell,k,q;1)$ RS sunflower. Moreover, for all $i,j\in [s], i\neq j$, we have that $\mathsf{RS}_{\ell,k}(\underline{\alpha}_{i}) \cap \mathsf{RS}_{\ell,k}(\underline{\alpha}_{j})=\langle \underline{1} \rangle_{\Fq}$.
\end{theorem}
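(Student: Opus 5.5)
The proof is direct linear algebra: I will show both inclusions of $\mathsf{RS}_{\ell,k}(\underline{\alpha}_i)\cap \mathsf{RS}_{\ell,k}(\underline{\alpha}_j)=\langle \underline{1}\rangle_{\Fq}$ for $i\neq j$. Together these give the sunflower property with center $E=\langle\underline{1}\rangle_{\Fq}$, which has dimension $c=1<k$.

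The inclusion $\supseteq$ is immediate. The constant polynomial $1$ lies in $\Fq[x]_{<k}$, and its evaluation vector on any $\underline{\alpha}$ is $\underline{1}$.

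For $\subseteq$, take $v$ in the intersection. Then there are $f,g\in\Fq[x]_{<k}$ with $f(\alpha_{i,h})=v_h=g(\alpha_{j,h})$ for all $h\in[\ell]$. Write $f=\sum_{m=0}^{k-1}a_mx^m$ and $g=\sum_{m=0}^{k-1}b_mx^m$. The equalities become
\[
(a_0-b_0)+\sum_{m=1}^{k-1}a_m\alpha_{i,h}^m-\sum_{m=1}^{k-1}b_m\alpha_{j,h}^m=0,\qquad h\in[\ell].
\]
Equivalently, $V_{k,\ell}(\underline{\alpha}_i,\underline{\alpha}_j)\,w=0$ with
\[
w=(a_0-b_0,\,a_1,\ldots,a_{k-1},\,-b_1,\ldots,-b_{k-1})^{\top}\in\Fq^{2k-1}.
\]
The matrix has $2k-1$ columns and, by hypothesis, rank $2k-1$. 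This is possible because $\ell\ge 2k-1$. So its columns are linearly independent, the kernel is trivial, and $w=0$.

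It follows that $a_1=\cdots=a_{k-1}=0$, so $f=a_0$ is constant and $v=a_0\underline{1}\in\langle\underline{1}\rangle_{\Fq}$. This gives equality of the intersection with $E$ for every pair $i\ne j$.

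The only step needing any care is the bookkeeping in the middle paragraph. The two constant terms merge into a single column of ones, and that is exactly why the matrix has $2k-1$ rather than $2k$ columns. The constant is therefore not determined by the rank condition, which matches the fact that $\underline{1}$ survives in every intersection. The distinctness of the entries within each $\underline{\alpha}_i$ is used only so that each $\mathsf{RS}_{\ell,k}(\underline{\alpha}_i)$ is a genuine $k$-dimensional RS code, hence a valid petal. There is no real obstacle beyond this.
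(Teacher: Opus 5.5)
Your proof is correct and follows essentially the same route as the paper: you encode a common codeword $f(\underline{\alpha}_i)=g(\underline{\alpha}_j)$ as the kernel vector $(f_0-g_0,f_1,\ldots,f_{k-1},-g_1,\ldots,-g_{k-1})$ of $V_{k,\ell}(\underline{\alpha}_i,\underline{\alpha}_j)$, and full column rank forces it to vanish. The only difference is that you argue directly on an arbitrary element of the intersection, whereas the paper argues by contradiction from $\dim\ge 2$; your version is slightly cleaner and avoids the paper's shift by $g_0$.
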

\begin{proof}
    Since for every $i\in [s]$, we have that $\underline{\alpha}_i \in \Fq^{\ell}$ contains pairwise distinct points, we have that $\mathsf{RS}_{\ell,k}(\underline{\alpha}_{i}
    )$ is of dimension $k$ and length $\ell$.
    Hence, we are left to determine the intersection between any two elements of $\cC$. 
    To do so, suppose that there exist 
    $i,j\in [s]$ with $i \ne j$ such that the dimension, as $\mathbb{F}_q$-vector space, of the intersection of $\mathsf{RS}_{\ell,k}(\underline{\alpha}_{i})$ and $\mathsf{RS}_{\ell,k}(\underline{\alpha}_{j})$ is at least $2$. Then there exist two polynomials $f(x)=\sum_{i=0}^{k-1}f_ix^i,g(x)=\sum_{h=0}^{k-1} g_h x^h \in \mathbb{F}_q[x]_{<k}$ (not both constant) such that
    $$c=(f(\alpha_{i,1}),\ldots,f(\alpha_{i,\ell}))=(g(\alpha_{j,1}),\ldots,g(\alpha_{j,\ell})),$$
    where $\underline{\alpha}_i=(\alpha_{i,1},\ldots,\alpha_{i,\ell})$ and $\underline{\alpha}_j=(\alpha_{j,1},\ldots,\alpha_{j,\ell})$.
    Define $f' = f - g_0$ and $g' = g - g_0$. Then, since $\underline{1} \in \mathsf{RS}_{\ell,k}(\underline{\alpha}_{i}) \cap \mathsf{RS}_{\ell,k}(\underline{\alpha}_{j})$, we have that $c - (g_0, \ldots, g_0) \in \mathsf{RS}_{\ell,k}(\underline{\alpha}_{i}) \cap \mathsf{RS}_{\ell,k}(\underline{\alpha}_{j})$ and 
    \[
    c - (g_0, \ldots, g_0) = (f'(\alpha_{i,1}), \ldots, f'(\alpha_{i,\ell})) = (g'(\alpha_{j,1}), \ldots, g'(\alpha_{j,\ell}))\;.
    \]
    
    This implies that $v= (f_0 - g_0, f_1, \ldots, f_{k-1}, -g_1, \ldots,-g_{k-1})$ is a nonzero vector such that $V_{k, \ell}(\underline{\alpha}_i,\underline{\alpha}_j)v^T=\underline{0}$.
    Since, $\textrm{rk}(V_{k,\ell}(\underline{\alpha}_i,\underline{\alpha}_j))=2k-1$ we have that $v=\underline{0}$ and so $f=g=g_0 \in \Fq$, in contradiction to $f$ and $g$ being nonconstant. The moreover part follows by noting that $\langle \underline{1} \rangle_{\Fq}$ is contained in any RS code.
\end{proof}


\section{Constructing and counting RS sunflowers}

We begin with a counting result for RS codes, which is particularly useful in the case $k=2$, where distinct RS codes already have the desired pairwise intersection behavior. We then give explicit recursive constructions based on the rank criterion from Theorem~\ref{thm:constr}. Finally, we prove existence bounds by greedy selection, both in the larger class of MDS codes and in the restricted class of RS codes, and compare them with the recursive construction.

\subsection{Counting RS codes and the case $k=2$}
We begin by counting the number of distinct RS codes of a given length and dimension. This serves two purposes. First, when $k=2$, any two distinct RS codes intersect only in the subspace of constant vectors, so this counting result immediately yields RS sunflowers. Second, we will later use the same count to quantify how sparse RS codes are among all subspaces of a fixed dimension.
The approach begins by identifying a criterion, formulated via evaluation points, that determines when two such RS codes coincide. After establishing this characterization, the desired counting result follows directly as a corollary.

\begin{lemma} \label{RS-eq}
Let $k \geq 2$ and $2(k-1)<n \leq q$. Let $\underline{x}=(x_1,\ldots,x_n)$ and $\underline{y}=(y_1,\ldots,y_n)$ be two vectors in $\mathbb{F}_q^n$ both having pairwise distinct coordinates. Then for the corresponding RS codes $\mathsf{RS}_{n,k}(\underline{x})$ and $\mathsf{RS}_{n,k}(\underline{y})$ one has that
$$\mathsf{RS}_{n,k}(\underline{x})=\mathsf{RS}_{n,k}(\underline{y})$$
if and only if there exist $a,b \in \mathbb{F}_q$ with $a \ne 0$ such that $y_i=ax_i+b$ for all $i=1,\ldots,n$. In other words, two evaluation vectors give rise to the same RS code if and only if they are related by an affine transformation.
\end{lemma}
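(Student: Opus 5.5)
The "if" direction is immediate, and I will dispose of it first. Suppose $y_i=ax_i+b$ with $a\neq 0$. For any $f\in\Fq[x]_{<k}$, the polynomial $g(x):=f(a^{-1}(x-b))$ again has degree less than $k$ and satisfies $g(y_i)=f(x_i)$. Hence $\mathsf{RS}_{n,k}(\underline{x})\subseteq\mathsf{RS}_{n,k}(\underline{y})$. Since both codes have dimension $k$ (the coordinates are distinct and $n\ge k$), they are equal.

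For the "only if" direction I will use the fact that $\underline{x}$ itself is a codeword of $\mathsf{RS}_{n,k}(\underline{x})$, namely the evaluation of the polynomial $x$; this needs $k\ge 2$. If the two codes coincide, there is a polynomial $g\in\Fq[x]_{<k}$ with $g(y_i)=x_i$ for all $i$. Symmetrically, there is $f\in\Fq[x]_{<k}$ with $f(x_i)=y_i$ for all $i$. Composing gives $g(f(x_i))=x_i$ for all $i\in[n]$. Thus the polynomial $g\circ f - x$, which has degree at most $(k-1)^2$, vanishes at $n$ distinct points. Unfortunately $(k-1)^2$ can exceed $n$, so this naive route fails in general, and I will avoid composition.

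Instead I will work with the square of the codeword. Set $u=(y_1,\dots,y_n)$, which equals $f(\underline{x})$ with $\deg f=d\le k-1$. The key quantity is the coordinatewise square $u^2$. On the $\underline{y}$-side, $u^2$ is the evaluation of $y^2$, and the product of two codewords lies in $\mathsf{RS}_{n,2k-1}(\underline{y})$. On the $\underline{x}$-side, $u^2$ is the evaluation of $f^2$.

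To compare the two sides I will use Schur squares. The Schur square of $\mathsf{RS}_{n,k}(\underline{x})$ is $\mathsf{RS}_{n,2k-1}(\underline{x})$, which is a proper code because $2k-1\le n$ by the hypothesis $n>2(k-1)$. This space is determined by the code alone, so $\mathsf{RS}_{n,2k-1}(\underline{x})=\mathsf{RS}_{n,2k-1}(\underline{y})$. Iterating the square-free idea is cleaner, though: consider powers $y^j$ for $j\le k-1$. Each $u^j$ lies in the original code $\mathsf{RS}_{n,k}(\underline{y})=\mathsf{RS}_{n,k}(\underline{x})$, so $u^j=h_j(\underline{x})$ with $\deg h_j\le k-1$. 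But we also have $u^j=f^j(\underline{x})$, and $\deg f^j\le j(k-1)$.

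I take $j=2$, which requires $k\ge 3$; the case $k=2$ is trivial because then $\deg f\le 1$ already, and $f(x)=ax+b$ with $a\ne 0$ since the $y_i$ are distinct. For $k\ge 3$, both $f^2$ and $h_2$ have degree at most $2k-2<n$, and they agree at the $n$ distinct points $x_i$. Hence $f^2=h_2$ as polynomials, which forces $2d\le k-1$.

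The main obstacle is to push this bound down to $d=1$. My plan is to continue with larger powers $j$ as long as $j(k-1)<n$ fails to be guaranteed, arguing by degree. More robustly, I will use symmetry. Write $\deg f=d$ and $\deg g=e$. Since $\mathsf{RS}_{n,k}$ is closed under the relevant powers, any polynomial $p$ of degree less than $k$ satisfies $p\circ f\in\Fq[x]_{<k}$ modulo agreement on the points. I choose $p=x^m$ with $m=\lfloor (k-1)/d\rfloor$. Then $m\cdot d\le k-1<n$, so degree reasoning applies. Running this with $p=x^{m+1}$, the degree $(m+1)d\le 2k-2<n$ still applies, and this would force $(m+1)d\le k-1$, contradicting the choice of $m$ unless $d=1$.

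Hence $d=1$, so $f(x)=ax+b$, and $a\ne 0$ because the $y_i$ are distinct. The bound $n>2(k-1)$ is used exactly at this degree-comparison step, and that is where I expect the care to be needed.
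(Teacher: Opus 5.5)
Your final argument, which takes $m=\lfloor (k-1)/d\rfloor$ and shows that $f^{m+1}$ must equal the degree-$<k$ polynomial representing $u^{m+1}$ because $(m+1)d\le 2k-2<n$, is exactly the paper's proof with $s'=m+1$; your ``if'' direction, one inclusion plus equality of dimensions, is equivalent to the paper's. You can delete the abandoned detours ($g\circ f$, Schur squares, the $j=2$ step, the unused ``symmetry'' remark), but you should state explicitly that $m+1\le k-1$, which holds because $d\ge 2$ gives $m\le (k-1)/2$ and forces $k\ge 3$; this is what guarantees that $u^{m+1}$ is a codeword.
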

\begin{proof}
We prove one implication at a time.

$(\Leftarrow)$. Let $a,b \in \mathbb{F}_q$ with $a \ne 0$ such that $y_i=ax_i+b$ for all $i\in [n]$. Let $f(x)$ be a polynomial of degree $<k$. It holds that $f(y_i) = f (h(x_i))$ where $h(x_i) = ax_i + b$ and therefore, $g:= f\circ h$ is of degree $< k$. This implies that $\RS_{n,k}(\underline{y}) \subseteq \RS_{n,k}(\underline{x})$. The other direction follows similarly by noting that $x_i = (y_i - b)/a$ is also linear polynomial.

$(\Rightarrow)$. Assume that $\mathsf{RS}_{n,k}(\underline{x})=\mathsf{RS}_{n,k}(\underline{y})$. Then, $\underline{y} \in \RS_{n,k}(\underline{x})$ which implies that there exists a degree $< k$ polynomial $f$ such that $y_i = f(x_i)$ for all $i\in [n]$. Note that $f$ must be non constant as otherwise we would get contradiction to the assumption that $(y_1, \ldots, y_n)$ are pairwise distinct. 
In general, for every $s \in [k-1]$, we have that $(y_1^s, \ldots, y_n^s)\in \RS_{n,k}(\underline{x})$, and thus, there exists a polynomial $g_s$ of degree $<k$ such that for all $i\in [n]$ we have $y_i^s = g_s(x_i)$. Combining, we have 
\[
g_s(x_i) = y_i^s = (f(x_i))^s \;.
\]

This implies that for all $s\in [k-1]$ the polynomial $G_s(x) = g_s(x) - (f(x))^s$ is zero on $x_1, \ldots, x_n$, i.e., has at least $n$ roots. 
Now assume that $\deg(f) \geq 2$ and set $s' = \lfloor(k-1)/\deg(f) \rfloor + 1 \leq k-1$. 
We have that 
\begin{equation}\label{eq:s-prime-lb}
    \deg((f(x))^{s'}) = s' \cdot\deg(f) > k-1
\end{equation}
and on the other hand, we have
\[
\deg((f(x))^{s'}) = s' \cdot\deg(f) \leq k-1 + \deg(f) \leq 2(k-1) < n \;,
\]
where the last inequality is according to our stated assumption about $k$ and $n$. 
Now, since $\deg(g_{s'}) \leq k-1$ we have that $\deg(G_{s'}) < n$, we get that $G_{s'}$ is the zero polynomial. 
This implies that
\[
\deg((f(x))^{s'}) = \deg(g_{s'}(x)) \leq k-1 \;,
\]
which contradicts \eqref{eq:s-prime-lb}. Thus, we get that $f(x) = ax + b$ with $a\neq 0$.
\end{proof}

\begin{remark}
    We note that only the proof of the $(\Rightarrow)$ direction relied on the assumption $2(k-1) < n$, and one can verify that the $(\Leftarrow)$ implication is valid for all $k$.
    Nevertheless, we stated \Cref{RS-eq} in this form because, throughout this paper, we focus exclusively on the setting where $\ell$, the length of the vectors in the ambient space, satisfies $\ell \ge 2k-1 > 2k-2$.
\end{remark}

This Lemma implies the following corollary.
\begin{corollary} \label{RS-count}
 Let $k \geq 2$ and $2(k-1)<n \leq q$. Then, the number of distinct RS codes of dimension $k$ and length $n$ over $\mathbb{F}_q$ is 
 \[
 v(q,n):=\frac{(q-2)!}{(q-n)!} \;.
 \]   
\end{corollary}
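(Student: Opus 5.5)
By Lemma~\ref{RS-eq}, counting distinct RS codes of dimension $k$ and length $n$ reduces to counting orbits of the affine group on evaluation vectors. Let $X$ be the set of vectors $\underline{x}\in\F_q^n$ with pairwise distinct coordinates, so $|X| = q(q-1)\cdots(q-n+1) = \frac{q!}{(q-n)!}$. Let $\mathrm{AGL}(1,q)=\{h(x)=ax+b : a\in\F_q^*,\, b\in\F_q\}$, a group of order $q(q-1)$. It acts on $X$ coordinatewise by $\underline{x}\mapsto (h(x_1),\ldots,h(x_n))$; the image again has distinct coordinates because $h$ is a bijection of $\F_q$. The hypotheses $k\ge 2$ and $2(k-1)<n\le q$ are exactly those of Lemma~\ref{RS-eq}, which says that $\underline{x}\mapsto \mathsf{RS}_{n,k}(\underline{x})$ is constant on orbits and separates different orbits. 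Hence the number of distinct RS codes equals the number of orbits of $\mathrm{AGL}(1,q)$ on $X$.

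Next I show the action is free. Suppose $h(x_i)=x_i$ for all $i$. Then the polynomial $(a-1)x+b$ vanishes at $n$ distinct points. Since $n>2(k-1)\ge 2$, we have $n\ge 3>1$, so this polynomial must be zero, giving $a=1$ and $b=0$. Thus every stabilizer is trivial.

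By the orbit-stabilizer theorem, every orbit therefore has size $q(q-1)$. The number of orbits is
\[
\frac{q!/(q-n)!}{q(q-1)} = \frac{(q-2)!}{(q-n)!} = v(q,n).
\]

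The only point needing care is freeness, and it only needs $n\ge 2$, which is guaranteed here. Everything else is a direct application of Lemma~\ref{RS-eq}. So I expect no real obstacle beyond checking that the lemma's hypotheses match those of the corollary.
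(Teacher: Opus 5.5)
Your proposal is correct and follows the paper's proof essentially verbatim: you count the $q!/(q-n)!$ vectors with pairwise distinct coordinates, use Lemma~\ref{RS-eq} to identify distinct codes with affine orbits, observe that the action is free, and divide by $q(q-1)$. Your explicit freeness argument (the polynomial $(a-1)x+b$ vanishing at $n\ge 2$ distinct points) spells out the paper's one-line remark that each vector has at least two distinct coordinates.
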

\begin{proof}
The number of vectors $\underline{x}\in\mathbb{F}_q^n$ with pairwise distinct coordinates is
$$|\{\underline{x} \in \mathbb{F}_q^n \mid x_i \ne x_j \text{ for } i \ne j\}|=q(q-1)(q-2)\cdots(q-(n-1))=\frac{q!}{(q-n)!}.$$
By Lemma~\ref{RS-eq}, two such vectors define the same RS code if and only if they differ by an affine transformation $T\mapsto aT+b$, with $a\ne 0$. Since each vector has at least two distinct coordinates, this action is free, and each equivalence class has size $q(q-1)$. Therefore, the number of distinct RS codes of dimension $k$ and length $n$ over $\mathbb{F}_q$ is
$$\frac{q!}{q(q-1)(q-n)!}=\frac{(q-2)!}{(q-n)!},$$
proving the statement of the corollary.
\end{proof}

For $k=2$, this count immediately gives a sunflower. Indeed, every two-dimensional RS code contains the all-one vector, and two distinct two-dimensional subspaces containing this vector intersect exactly in its span.

\begin{corollary}
For $2<\ell\leq q$, the set of all distinct RS codes of dimension $2$ and length $\ell$ over $\mathbb{F}_q$ is an $(\ell,2,q;1)$ RS sunflower with center $\langle\underline{1}\rangle_{\Fq}$ and size
$$\frac{(q-2)!}{(q-\ell)!} = \Omega_{\ell}(q^{\ell-2})\;.$$
\end{corollary}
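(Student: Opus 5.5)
The plan has three steps: show every two-dimensional RS code contains $\underline{1}$, show that two distinct ones meet exactly in $\langle\underline{1}\rangle_{\Fq}$, and then count them using Corollary~\ref{RS-count}.

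\textbf{Center.} Let $\underline{\alpha}$ have pairwise distinct entries. Then $\mathsf{RS}_{\ell,2}(\underline{\alpha})$ is spanned by $\underline{1}$ (evaluation of the constant polynomial $1$) and $\underline{\alpha}$ (evaluation of $x$). These two vectors are linearly independent, since $\ell\geq 2$ and the entries of $\underline{\alpha}$ are not all equal. So every such code is a $2$-dimensional subspace of $\Fq^\ell$ containing $\underline{1}$.

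\textbf{Pairwise intersection.} Take two distinct such codes $C_1\neq C_2$. Their intersection contains $\langle\underline{1}\rangle_{\Fq}$. If it had dimension $2$, then $C_1=C_1\cap C_2=C_2$ by equality of dimensions, which is a contradiction. Hence $C_1\cap C_2=\langle\underline{1}\rangle_{\Fq}$, and the family of all distinct codes is an $(\ell,2,q;1)$ RS sunflower with this center. This step does not need Theorem~\ref{thm:constr}.

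\textbf{Size.} Apply Corollary~\ref{RS-count} with $k=2$ and $n=\ell$. Its hypothesis $2(k-1)<n\leq q$ becomes $2<\ell\leq q$, which is exactly what we assume. The number of distinct codes is therefore $(q-2)!/(q-\ell)!$. This equals the product $(q-2)(q-3)\cdots(q-\ell+1)$, which has $\ell-2$ factors. Each factor is at least $q/\ell$ when $q\geq \ell$, or more simply each is $q(1-o(1))$ for fixed $\ell$. So the size is $\Omega_\ell(q^{\ell-2})$.

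No step is a real obstacle. The only thing to check is that the parameter range matches the hypothesis of Corollary~\ref{RS-count}.
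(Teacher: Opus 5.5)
Your proposal is correct and follows essentially the same argument as the paper. Every two-dimensional RS code is a $2$-dimensional space containing $\underline{1}$, so two distinct ones meet exactly in $\langle\underline{1}\rangle_{\Fq}$, and the size follows from Corollary~\ref{RS-count} with $k=2$, $n=\ell$. Your explicit bound $(q-2)\cdots(q-\ell+1)\geq (q/\ell)^{\ell-2}$ for $q\geq\ell$ justifies the $\Omega_\ell(q^{\ell-2})$ claim, which the paper leaves implicit.
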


\begin{remark}
A formula similar to that in Corollary \ref{RS-count} was derived for GRS codes in \cite[Theorem 2.1]{BGH}. However, the methods employed there are quite different, and the theorem’s formula cannot be applied to our setting in a straightforward manner.
\end{remark}

\subsection{From codes correcting deletions to sunflowers}
Our first approach to building sunflowers of Reed–Solomon codes begins with the observation that the generalized $V$-matrix introduced in \Cref{matrix} serves as a key object in the study of Reed–Solomon codes correcting \emph{deletions}. A deletion is an error event in which one symbol of the transmitted codeword is entirely removed, so the receiver obtains fewer symbols and consequently loses synchronization.
The behavior and performance of Reed–Solomon codes with deletion-correction functionality have been thoroughly studied in a series of recent works \cite{duc2019explicit,liu20212,con2023reed,liu2024optimal, CGLZ25,beelen2025reed}.

In this section, we show that an algebraic condition on Reed–Solomon codes that is sufficient for deletion-correction naturally induces a collection of subspaces, each of which is an RS code, with the property that any two distinct subspaces intersect exactly in the common one-dimensional subspace $\langle \underline{1} \rangle _{\Fq}$.
In principle, this correspondence could be extended to broader classes of linear deletion-correcting codes (beyond Reed–Solomon codes). 
To our knowledge, no prior work has explicitly identified such a connection between linear deletion-correcting codes and subspace codes.

We call a sequence $I=(I_1, \ldots, I_{\ell}) \in [n]^{\ell}$ an \emph{increasing sequence} if $I_1 < \cdots < I_{\ell}$. In \cite{con2023reed,CGLZ25}, the following definition of a $V_{k, \ell, I,J}$-matrix was given (note that this definition is similar to our definition of a $V$-matrix).

\begin{definition}[$V_{k,\ell,I,J}$-matrix \cite{CGLZ25}]\label{matrix-insdel}
    For positive integers $\ell, k$ and increasing sequences $I=(I_1,\dots,I_\ell),J=(J_1,\dots,J_\ell)\in [n]^{\ell}$ of length $\ell$, define the $\ell\times(2k-1)$ matrix
    \[
        V_{k,\ell,I,J}(X_1,X_2,\cdots,X_n):=\left(\begin{array}{ccccccc}1 & X_{I_1} & \cdots & X_{I_1}^{k-1} & X_{J_1} & \cdots & X_{J_1}^{k-1} \\1 & X_{I_2} & \cdots & X_{I_2}^{k-1} & X_{J_2} & \cdots & X_{J_2}^{k-1} \\ \vdots& \vdots & \ddots& \vdots & \vdots & \ddots & \vdots \\1 & X_{I_\ell} & \cdots & X_{I_\ell}^{k-1} & X_{J_\ell} & \cdots & X_{J_\ell}^{k-1}\end{array}\right),
    \]
over the field 
$\F_q(X_1,\dots,X_n)$. 
\end{definition}

Then, in \cite{con2023reed,CGLZ25}, the authors presented a sufficient algebraic condition for an $\mathsf{RS}_{n,k}(\underline{\alpha})$ code to correct deletions.
\begin{lemma}\cite[Lemma 13]{CGLZ25}\label{lem:corr-cond}
    Let $\ell\ge 2k-1$. Consider the $[n, k]_q$ RS code $\mathsf{RS}_{n,k}(\alpha_1, \ldots, \alpha_n)\subseteq\F_q^n$ associated with an evaluation vector $\underline{\alpha}:=\left(\alpha_1, \ldots, \alpha_n\right)\in\F_q^n$. 
    If the code cannot correct $n-\ell$ deletions, then there exist two increasing sequences $I=(I_1,\dots,I_\ell), J=(J_1,\dots,J_\ell) \in[n]^{\ell}$ that agree on at most $k-1$ coordinates
    such that matrix $V_{k,\ell,I, J}(\underline{\alpha})$ does not have full column rank.
\end{lemma}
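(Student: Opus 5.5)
The plan is to prove the contrapositive directly from the definition of deletion correction for a linear code. An $[n,k]_q$ code corrects $n-\ell$ deletions exactly when no two distinct codewords share a common subsequence of length $\ell$. So assume $\mathsf{RS}_{n,k}(\underline{\alpha})$ cannot correct $n-\ell$ deletions. Then there are polynomials $f\neq g$ in $\Fq[x]_{<k}$ and increasing sequences $I,J\in[n]^\ell$ with
\[
f(\alpha_{I_t})=g(\alpha_{J_t})\quad\text{for all } t\in[\ell].
\]

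The first step is to turn this into a kernel vector of $V_{k,\ell,I,J}(\underline{\alpha})$. As in the proof of Theorem~\ref{thm:constr}, replace $f,g$ by $f-g_0$ and $g-g_0$. The vector $v=(f_0-g_0,f_1,\dots,f_{k-1},-g_1,\dots,-g_{k-1})$ then satisfies $V_{k,\ell,I,J}(\underline{\alpha})v^T=\underline{0}$. This $v$ is nonzero unless $f$ and $g$ are both constant with $f_0=g_0$, which is excluded by $f\neq g$.

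So the rank of $V_{k,\ell,I,J}(\underline{\alpha})$ drops, and it remains to ensure $I$ and $J$ agree on at most $k-1$ coordinates. The main case split concerns the set $S=\{t: I_t=J_t\}$.
\begin{itemize}
\item If $|S|\ge k$, then $f(\alpha_{I_t})=g(\alpha_{I_t})$ at $k$ distinct points, since the $\alpha_i$ are distinct. Because $\deg(f-g)<k$, this forces $f=g$, a contradiction.
\item Hence $|S|\le k-1$ automatically.
\end{itemize}
This gives the claim for the chosen pair of codewords.

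The main obstacle is a subtlety in the first step. If $f$ is constant and $g$ is not, the normalized $v$ still has a nonzero $g$-part, so nothing breaks there. The more delicate point is whether $I=J$ could occur, but this is excluded by the argument above, since $\ell\ge 2k-1\ge k$. I therefore expect the argument to go through with only these checks.
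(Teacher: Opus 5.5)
Your proof is correct and follows the standard argument behind this lemma. The paper only quotes the lemma from \cite{CGLZ25} without proof, but its proof of \Cref{thm:constr} uses the same nonzero kernel vector $v=(f_0-g_0,f_1,\dots,f_{k-1},-g_1,\dots,-g_{k-1})$, and you rule out $k$ agreeing positions of $I,J$ by forcing $f=g$, as one should. (Strictly speaking, you give a direct proof, not a proof of the contrapositive, but this does not affect the argument.)
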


We now show that the same condition guarantees the existence of a family of RS codes of dimension $k$ and length $\ell$ whose pairwise intersections are all exactly $\langle \underline{1}\rangle_{\Fq}$.

\begin{proposition} \label{prop:from-del-to-subspaces}
Let $k, \ell, n \geq2$ be natural numbers such that $\ell \geq 2k-1$ and $n > 2\ell^2$. Let $\alpha_1, \ldots, \alpha_n \in \F_q$ be $n$ distinct elements and denote $\underline{\alpha} = (\alpha_1, \ldots, \alpha_n)$. Suppose that for every pair of increasing sequences $I, J \in [n]^{\ell}$ that agree on at most $k-1$ coordinates, we have $\rk (V_{k,\ell,I,J}(\underline{\alpha})) = 2k-1$. Then there exists an $(\ell, k, q;1)$ RS sunflower with at least $p^k$ petals, for some prime $p$ satisfying $p \in [n/2\ell, n/\ell]$.

Moreover, there is an encoding algorithm that given a message $m \in \F_p^k$, outputs a generator matrix of an RS code from this sunflower using $O(k\cdot \ell)$ operations over $\F_p$, $O(\ell)$ arithmetic operations over $\mathbb{Z}$ with all involved integers smaller than $n$, and $O(k\cdot \ell)$ operations over $\F_q$.
\end{proposition}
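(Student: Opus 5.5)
Fix a prime $p$ with $n/(2\ell)\le p\le n/\ell$; Bertrand's postulate provides one, since $n/\ell>2$. We will encode vectors of $\F_p^k$ as increasing sequences $I\in[n]^\ell$. The point is that any two distinct sequences should agree in at most $k-1$ coordinates. Once this holds, Theorem~\ref{thm:constr} gives the sunflower directly: take the evaluation vector $\underline{\alpha}_I=(\alpha_{I_1},\dots,\alpha_{I_\ell})$. Its entries are pairwise distinct because $I$ is strictly increasing and the $\alpha_i$ are distinct. Moreover $V_{k,\ell}(\underline{\alpha}_I,\underline{\alpha}_J)=V_{k,\ell,I,J}(\underline{\alpha})$, which has rank $2k-1$ by hypothesis.

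For the encoding, identify $m=(m_0,\dots,m_{k-1})\in\F_p^k$ with the polynomial $f_m(x)=\sum_t m_t x^t\in\F_p[x]_{<k}$. Since $\ell\le p$, we can fix distinct points $\beta_1,\dots,\beta_\ell\in\F_p$, say $\beta_h=h-1$. Evaluating gives the Reed--Solomon codeword $(f_m(\beta_1),\dots,f_m(\beta_\ell))\in\F_p^\ell$. Represent each value $f_m(\beta_h)$ as an integer $c_h\in\{0,\dots,p-1\}$ and set
\[
I_h=(h-1)p+c_h+1 .
\]
Then $I_h$ lies in the $h$-th block $\{(h-1)p+1,\dots,hp\}$. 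The blocks are disjoint and increasing, so $I$ is an increasing sequence. It lies in $[n]^\ell$ because $\ell p\le n$.

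Next we check the agreement bound, keeping track of positions. Suppose $I_h=J_{h'}$ for some $h,h'$. Both entries then lie in the same block, which forces $h=h'$ and $c_h=c'_h$, i.e. $f_m(\beta_h)=f_{m'}(\beta_h)$. If $m\ne m'$, then $f_m-f_{m'}$ is a nonzero polynomial of degree $<k$, so it has at most $k-1$ roots. Hence $I$ and $J$ agree in at most $k-1$ positions, even as sets, so whichever notion of ``agree'' is meant in Lemma~\ref{lem:corr-cond} is satisfied. This also shows that distinct messages give distinct sequences $I$.

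It remains to show that distinct messages give distinct codes, and this is where the hypothesis does the work. By Theorem~\ref{thm:constr}, for $m\ne m'$ we get $\RS_{\ell,k}(\underline{\alpha}_I)\cap\RS_{\ell,k}(\underline{\alpha}_J)=\langle\underline{1}\rangle_{\Fq}$. This intersection has dimension $1<k$, so the two codes are distinct. The family therefore has exactly $p^k$ petals, and together with the previous steps we obtain an $(\ell,k,q;1)$ RS sunflower.

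For the algorithm:
\begin{enumerate}
\item Evaluate $f_m$ at the $\ell$ points $\beta_h$ by Horner's rule, using $O(k\ell)$ operations over $\F_p$.
\item Compute the indices $I_h$ with $O(\ell)$ integer operations; every integer involved is at most $n$.
\item Form the generator matrix with rows indexed by $t<k$ and entries $\alpha_{I_h}^t$. Computing the powers iteratively takes $O(k\ell)$ multiplications in $\F_q$.
\end{enumerate}

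The step I expect to be the main obstacle is getting the agreement bound right and confirming that agreements of $I$ and $J$ at different positions are excluded. The block structure handles this. The remaining points to check are the conditions of Bertrand's postulate and that $p\ge\ell$, which is needed to choose the distinct evaluation points. The latter follows from $n>2\ell^2$, since then $p\ge n/(2\ell)>\ell$.
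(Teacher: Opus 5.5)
Your proposal is correct and follows essentially the same route as the paper: an $[\ell,k]_p$ Reed--Solomon (MDS) code over a Bertrand prime $p\in[n/2\ell,n/\ell]$ is mapped block-wise to increasing index sequences via $I_h=(h-1)p+c_h+1$. The MDS property bounds agreements by $k-1$, Theorem~\ref{thm:constr} then gives the sunflower, and the encoder has the same three steps. Your additional checks — that agreements are bounded even as sets, and that distinct messages give distinct codes because their intersection is one-dimensional — are valid refinements of points the paper leaves implicit.
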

\begin{proof}
    Let $\underline{\alpha} = (\alpha_1, \ldots, \alpha_n) \in \Fq^n$ be such that the stated condition holds.
    Let $\mathcal{P} \subset [n]^{\ell}$ be a set of increasing sequences of length $\ell$ such that for any two distinct $I,J \in \mathcal{P}$, we have that $I$ and $J$ agree on at most $k-1$ coordinates. 
    Consider the following set of RS codes
    \[
    \lbrace \mathsf{RS}_{\ell,k}(\underline{\alpha}_{I}) \mid I \in \mathcal{P} \rbrace \;.
    \]
    Clearly, each RS code has length $\ell$ and dimension $k$. We first show that any such family $\mathcal{P}$ yields the desired sunflower. We then construct a suitable $\mathcal{P}$ of size $p^k$.
    For distinct $I,J\in\mathcal{P}$, our assumption gives
    \[
    \rk V_{k,\ell}(\underline{\alpha}_I,\underline{\alpha}_J)
    =\rk V_{k,\ell,I,J}(\underline{\alpha})=2k-1.
    \]
    Thus, \Cref{thm:constr} shows that the corresponding RS codes form a sunflower with $|\mathcal{P}|$ petals.

    We now describe how to construct a set $\mathcal{P}$ such that any two distinct elements $I, J \in \mathcal{P}$ agree in at most $k-1$ positions.
    Let $\cC$ be an $[\ell, k]_{p}$ MDS code, where $p$ is a prime satisfying $n/2\ell \leq p \leq n/\ell$ (such a prime exists by Bertrand’s postulate). Note here that by our assumption on $n$ and $\ell$ given in the statement, we have $p \geq n/2\ell > \ell$, and thus, one can take $\cC$ to be an $[\ell, k]_p$ RS code.
    For each $i \in [\ell]$, we define a function
\begin{align*}
    \varphi_i: \mathbb{F}_p &\to [n]\\
    \beta &\mapsto (i-1)p + \beta + 1,
\end{align*}
where the arithmetic is carried out in $\mathbb{Z}$, identifying elements of $\mathbb{F}_p$ with the integers in $\{0,1,\ldots,p-1\}$. Clearly, each $\varphi_i$ is a bijection from $\F_p$ onto $\{(i-1)p + 1, \ldots,ip\}$.

    Now define
    \[
    \mathcal{P} = \left\{ (\varphi_1(c_1), \ldots, \varphi_{\ell}(c_{\ell})) \mid \underline{c} \in \cC \right\} \;.
    \]
    From the definition of the maps $\varphi_i$, it follows directly that every $I \in \mathcal{P}$ is increasing. Furthermore, if $I, J \in \mathcal{P}$ coincide on at least $k$ coordinates, then there must exist two codewords $\underline{c}, \underline{c}' \in \cC$ that agree on at least $k$ coordinates (since each $\varphi_i$ is a bijection), contradicting the assumption that $\cC$ is an MDS code. This proves the first part of the proposition, because $|\mathcal{P}| = |\cC| = p^k$.

    The encoding algorithm is given next. Let $\underline{m} \in \F_p^k$ be a message. 
    \begin{enumerate}
        \item Encode it as a codeword $(c_1, \ldots,c_{\ell}) \in \cC \subseteq\F_p^{\ell}$.
        \item Apply each $\varphi_i$ coordinate-wise for $i \in [\ell]$ to obtain an increasing sequence $I = (\varphi_1(c_1), \ldots, \varphi_{\ell}(c_{\ell}))$.
        \item Construct and output the generator matrix of $\RS_{\ell, k}(\underline{\alpha}_I)$. Since this is a Reed–Solomon code, this amounts to computing the corresponding Vandermonde matrix.
    \end{enumerate}
    From the preceding discussion, the encoding function defined by this algorithm is injective and yields a valid representation of a petal in our $(\ell, k, q; 1)$ RS sunflower. 
    
    We now analyze the computational complexity of the encoding algorithm. 
    In Step $1$, the algorithm encodes a message via a linear code of dimension $k$ and block length $\ell$, which takes at most $O(k\cdot \ell)$ operations in $\F_p$. 
    Step $2$ performs at most $O(\ell)$ arithmetic operations over $\mathbb{Z}$, with all involved integers bounded by $n$. 
    In Step $3$, one must compute $1, \beta, \beta^2, \ldots, \beta^{k-1}$ for each $\beta$ occurring in $\underline{\alpha}_I$, which can be carried out using $O(k\cdot \ell)$ operations in $\F_q$. Since $p \leq n/\ell < n \leq q$, we get that the overall complexity is dominated by Step $3$.
\end{proof}

\begin{remark}
    We note that the input to our encoding algorithm is a vector $\underline{m}\in \mathbb{F}_p^k$, whereas the output is a matrix in $\mathbb{F}_q^{k\times \ell}$. Thus, even writing down the output requires $\Omega(k\ell \log q)$ bits. 
    In our regime of interest, where $\ell=\Theta(k)$, the input length is $N=\Theta(k\log p)$, while the output length is $\Theta(k^2\log q)$. 
    Hence, if $q=2^{N^{\omega(1)}}$, then the output size itself is super-polynomial in the input length. 
    Consequently, regardless of the number of field operations performed by the encoding algorithm, merely writing down the output matrix already requires super-polynomial time.
\end{remark}

We next instantiate \Cref{prop:from-del-to-subspaces} using known deletion-correcting RS codes. 
For RS codes correcting substitutions, one can typically take $q=\Theta(n)$. However, when considering $k$-dimensional RS codes that can correct $n-\ell$ deletions, we do not know what is the minimal $q$ for which such a code exists.
In \cite[Theorem 16]{con2023reed}, the authors showed that for prime $q = \Theta(n^{4k-2})$, there exists an $[n,k]_q$ RS code that can correct $n-2k+1$ deletions. In fact, the proof of \cite[Theorem 16]{con2023reed} constructs an evaluation vector satisfying the rank condition of \Cref{prop:from-del-to-subspaces}. 
This leads to the following corollary

\begin{theorem} \label{thm:del-to-subs-exist-1}
    Let $k\geq 2$ be an integer. For every integer $n > 2(2k-1)^2$, there is a prime $q=\Theta(n^{4k-2})$ and an $(2k-1, k, q; 1)$ RS sunflower with $\Omega_k(n^k) = \Omega_k(q^{k/(4k-2)})$ petals.
\end{theorem}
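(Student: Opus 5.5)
We plan to obtain the theorem by feeding the evaluation vector from \cite[Theorem 16]{con2023reed} into \Cref{prop:from-del-to-subspaces} with $\ell = 2k-1$. Fix $k\ge 2$ and $n > 2(2k-1)^2 = 2\ell^2$.

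\emph{Step 1: the evaluation vector.} By the proof of \cite[Theorem 16]{con2023reed}, there is a prime $q=\Theta(n^{4k-2})$ and distinct $\alpha_1,\dots,\alpha_n\in\F_q$ with the following property: for every pair of increasing sequences $I,J\in[n]^{2k-1}$ that agree on at most $k-1$ coordinates, the matrix $V_{k,2k-1,I,J}(\underline{\alpha})$ has full column rank $2k-1$.

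We only sketch why this holds, since the statement is imported. Here $V_{k,2k-1,I,J}$ is square. Its determinant is a polynomial in $X_1,\dots,X_n$ of degree at most $2(k-1)(2k-1)$. This polynomial is nonzero whenever $I,J$ agree in at most $k-1$ positions; this is the key algebraic lemma of \cite{con2023reed}. One then chooses $\underline{\alpha}$ avoiding the zeros of all of these determinants, and of the polynomials $X_i-X_j$. A union bound via Schwartz--Zippel over the at most $\binom{n}{2k-1}^2\le n^{4k-2}$ pairs $(I,J)$ shows that a random choice works once $q$ exceeds a constant (depending on $k$) times $n^{4k-2}$. Bertrand's postulate then supplies a prime $q=\Theta_k(n^{4k-2})$ in the right range.

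\emph{Step 2: apply the proposition.} With $\ell=2k-1$, so that $\ell\ge 2k-1$ holds, and $n>2\ell^2$, \Cref{prop:from-del-to-subspaces} yields a $(2k-1,k,q;1)$ RS sunflower with at least $p^k$ petals. Here $p$ is a prime in $[n/2\ell,\,n/\ell]$. Since $\ell$ depends only on $k$, we get $p^k\ge (n/(2(2k-1)))^k=\Omega_k(n^k)$.

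\emph{Step 3: convert to $q$.} Since $q=\Theta(n^{4k-2})$, we have $n=\Theta_k(q^{1/(4k-2)})$. Hence $\Omega_k(n^k)=\Omega_k(q^{k/(4k-2)})$.

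The only genuine obstacle is Step 1. Specifically, one must confirm that the construction in \cite[Theorem 16]{con2023reed} certifies the rank condition for \emph{all} pairs $I,J$ agreeing in at most $k-1$ coordinates. It should not merely establish the weaker conclusion that the code corrects deletions. One must also check that the constants hidden in $\Theta(n^{4k-2})$ depend only on $k$. The remaining steps are routine.
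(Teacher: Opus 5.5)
Your proposal is correct and follows the paper's own argument. You instantiate \Cref{prop:from-del-to-subspaces} with $\ell=2k-1$, using the evaluation vector from the proof of \cite[Theorem 16]{con2023reed}, which the paper, like you, simply asserts satisfies the rank condition for all pairs $I,J$ agreeing in at most $k-1$ coordinates; you then convert $p^k=\Omega_k(n^k)$ into $\Omega_k(q^{k/(4k-2)})$, and your Schwartz--Zippel sketch and explicit flag on that agreement condition make this more detailed than the paper's one-line justification.
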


In \cite[Theorem 24]{con2023reed}, for $k < \log n/\log\log n$, the authors constructed an explicit $[n,k]_q$ RS code where $q = n^{k^{O(k)}}$ that can correct $n-2k+1$ deletions. 
Since the field size in this construction is very large, in order to ensure that the encoding algorithm has polynomial bit complexity, we take $k$ to be constant. In this case, the input length is $O(\log n)$ and the output length is $O(\log n)$. Moreover, performing $O(k^2) = O(1)$ operations over $\Fq$ takes at most $\textrm{poly}(\log q) = \textrm{poly} (\log n)$ time.   
This implies the following corollary
\begin{theorem} \label{thm:del-to-subs-explicit}
    Let $k$ be fixed and let $n$ be such that $n > 2(2k-1)^2$ and $k < \log n/\log\log n$. There exists an explicit $(2k-1, k, q;1)$ RS sunflower where $q = n^{k^{O(k)}}$ with $\Omega (q^{1/k^{O(k)}})$ petals.
\end{theorem}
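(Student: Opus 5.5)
The plan is to apply \Cref{prop:from-del-to-subspaces} with $\ell = 2k-1$, feeding it the explicit evaluation vector from \cite[Theorem 24]{con2023reed}. Then we check that the parameters, the explicitness, and the petal count match the statement.

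First, fix $k$ constant and $n$ with $n > 2(2k-1)^2$ and $k<\log n/\log\log n$. The construction of \cite[Theorem 24]{con2023reed} gives explicit distinct $\alpha_1,\ldots,\alpha_n\in\F_q$ with $q=n^{k^{O(k)}}$. The key point, as with \cite[Theorem 16]{con2023reed} before \Cref{thm:del-to-subs-exist-1}, is that this construction does more than correct $n-2k+1$ deletions. Its proof certifies the stronger algebraic statement: $\rk V_{k,2k-1,I,J}(\underline{\alpha})=2k-1$ for every pair of increasing $I,J\in[n]^{2k-1}$ agreeing on at most $k-1$ coordinates. I would verify this by inspecting that proof, since it establishes nonvanishing of the relevant determinant. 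This is the only real obstacle. \Cref{lem:corr-cond} runs in the wrong direction: it only gives rank condition $\Rightarrow$ deletion correction, so deletion correction alone would not suffice. If the cited proof used a weaker intermediate claim, I would re-derive the full-rank property from the explicit structure of the $\alpha_i$.

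Second, with $\ell=2k-1$ the hypothesis $n>2\ell^2$ is exactly the assumption $n>2(2k-1)^2$. \Cref{prop:from-del-to-subspaces} then yields a prime $p\in[n/(2\ell),n/\ell]$ and an $(\ell,k,q;1)$ RS sunflower with at least $p^k=\Omega_k(n^k)$ petals. Since $q=n^{k^{O(k)}}$, we have $n=q^{1/k^{O(k)}}$. Hence $n^k=q^{k/k^{O(k)}}=q^{1/k^{O(k)}}$, after absorbing the factor $k$ into the exponent's $O(\cdot)$. This gives $\Omega(q^{1/k^{O(k)}})$ petals.

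Finally, explicitness. The prime $p$ can be found by brute force in $\mathrm{poly}(n)$ time. The MDS code over $\F_p$ is an explicit RS code, since $p>\ell$. The maps $\varphi_i$ are elementary. The encoding algorithm of \Cref{prop:from-del-to-subspaces} uses $O(k\ell)=O(1)$ operations over $\F_p$ and over $\F_q$, each costing $\mathrm{poly}(\log q)=\mathrm{poly}(\log n)$ because $k$ is constant. Input and output lengths are $O(\log n)$, so the petals are produced in polynomial bit complexity. This is the sense of ``explicit'' in the statement, and it completes the argument.
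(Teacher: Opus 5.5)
Your proposal is correct and follows the paper's route. You instantiate \Cref{prop:from-del-to-subspaces} with $\ell=2k-1$ on the explicit evaluation vector of \cite[Theorem 24]{con2023reed}, convert $p^k=\Omega_k(n^k)$ into $\Omega(q^{1/k^{O(k)}})$, and argue polynomial bit complexity for constant $k$. You also note that the full-rank condition must be read off the proof of \cite[Theorem 24]{con2023reed}, because \Cref{lem:corr-cond} only gives rank $\Rightarrow$ deletion correction. That is precisely what the paper leaves implicit here, having said it explicitly only for \cite[Theorem 16]{con2023reed}.
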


    The number of petals we obtain in these results is not optimal when compared with known sunflower codes; however, our constructions are restricted to RS sunflowers.
    Specifically, by \Cref{rem:sunflower-subspace-codes}, we know that there exist $(2k-1, k, q; 1)$ sunflower codes of size $(q^{2k - 2} - 1)/(q^{k-1}-1) = \Theta(q^{k-1})$. 
    This raises the natural question of whether our constructions can be improved. More concretely, can we build $(2k-1, k, q;1)$ RS sunflowers with larger sizes? What can be said for general $\ell > 2k-1$?

    In the next section, we give a partial affirmative answer to this question by constructing $(\ell, k, q;1)$ RS sunflowers of size $\Omega_{k,\ell}(q^{\lfloor \ell / (2k-1)\rfloor})$. Plugging $\ell = 2k-1$, we get an $(\ell,k,q;1)$ RS sunflower with $\Omega_k(q)$ petals, improving the results obtained in \Cref{thm:del-to-subs-exist-1} and \Cref{thm:del-to-subs-explicit}.

    \begin{remark}
        In \cite[Theorem 4]{CGLZ25}, the authors showed that, for $\varepsilon\in(0,1)$, random RS codes over fields of size $q\geq n+2^{\operatorname{poly}(1/\varepsilon)}k$ correct at least $(1-\varepsilon)n-2k+1$ insdel errors with high probability. This result also applies to vanishing rates. However, in this case, $\ell=2k-1+\lfloor\varepsilon n\rfloor$. Under our requirement $n>2\ell^2$, $\varepsilon$ must tend to zero as $n\to\infty$, so the theorem does not retain its linear-alphabet guarantee with a constant independent of $n$.
    \end{remark}

\subsection{Recursive construction}
In this section, we describe a deterministic recursive procedure that constructs an $(\ell,k,q;1)$ RS sunflower with $\Omega_{\ell,k}(q^{\lfloor\ell/(2k-1)\rfloor})$ petals.
We begin with a high-level overview before stating the formal algorithm.

We set $t=\lfloor\ell/(2k-1)\rfloor$ and divide the first $t(2k-1)$ coordinates into $t$ consecutive blocks, each of size $2k-1$; the remaining coordinates form a suffix.
In the first step, we greedily construct a family $\mathcal{A}_1$ of evaluation vectors whose first blocks satisfy a pairwise nonsingularity condition: for any two vectors $\underline{\alpha}, \underline{\alpha}'$ in the family, the matrix
\[
V_{k,2k-1}\big((\alpha_1,\ldots,\alpha_{2k-1}), (\alpha_1',\ldots,\alpha_{2k-1}')\big),
\]
formed from the $(2k-1)$-coordinate prefixes of these vectors, is nonsingular.

In each subsequent step $s$, every vector $\underline{z}\in \mathcal{A}_{s-1}$ generates its own family of descendants. Every descendant of $\underline{z}$ keeps the first $(s-1)\cdot (2k-1)$ coordinates identical to those of $\underline{z}$, while its $s$-th block is chosen greedily so that the associated $V_{k,2k-1}$ matrices remain nonsingular. The coordinates beyond the $s$-th block are then assigned arbitrarily, with the only constraint that the resulting vector has pairwise distinct entries.

We next present the formal algorithm.

\begin{framed}
    \textbf{\underline{Algorithm 1}}\\
    
    \noindent\textbf{Input:} Let $k,\ell$ be integers such that $k\geq 2$ and $\ell\geq 2k-1$, and set $t=\lfloor\ell/(2k-1)\rfloor$. Let $\underline{z}= (z_1, \ldots,z_{\ell}) \in \Fq^{\ell}$ be a vector with pairwise distinct entries.

    \noindent\textbf{Output:} A set $\mathcal{A} \subset \Fq^{\ell}$ of evaluation vectors. such that 
    \[
    \RS_{\ell, k}(\underline{\alpha}) \cap \RS_{\ell, k}(\underline{\beta}) = \langle \underline{1}\rangle _{\Fq}
    \]
    for all distinct $\underline{\alpha}, \underline{\beta}\in \mathcal{A}$.

    \paragraph{Step 1:} Set $\mathcal{A}_1 = \{ \underline{z} \}$. 
    
    Repeat: 
    Let $\mathcal{B} \subseteq \Fq^{2k-1}$ be the set of all vectors $\underline{x}\in \Fq^{2k-1}$ for which one of the following holds:
    \begin{itemize}
        \item There exists $\underline{y}\in \mathcal{A}_1$ for which
            \[
            \det(V_{k, 2k-1}((y_1, \ldots,y_{2k-1}), (x_1, \ldots, x_{2k-1}))) = 0 
            \]
        \item There exists distinct $i,j \in [2k-1]$ for which $x_i = x_j$.
    \end{itemize}
    If $\mathcal{B} \neq \Fq^{2k-1}$, pick $\underline{u}\in \Fq^{2k-1} \setminus \mathcal{B}$ and pick $\underline{u}'\in \Fq^{\ell - (2k-1)}$ such that $\underline{u} \circ \underline{u}'$ is a vector with pairwise distinct coordinates. Append $\underline{u} \circ \underline{u}'$ to $\mathcal{A}_1$. 
    Otherwise, i.e., if $\mathcal{B} = \Fq^{2k-1}$, exit loop.
    
    \paragraph{Step 2:} For each $s = 2,\ldots, t$ do:

    For each $\underline{z} \in \mathcal{A}_{s-1}$, we will construct $\mathcal{A}_{s}^{(\underline{z})}$ as follows: 
    
    Set $\mathcal{A}_{s}^{(\underline{z})} = \{\underline{z}\}$. 
    
    Repeat: Let $\mathcal{B}\subseteq \Fq^{2k-1}$ be the set of all vectors $\underline{x}\in \Fq^{2k-1}$ for which one of the following holds:
    \begin{itemize}
        \item There exists $\underline{y}\in \mathcal{A}_{s}^{(\underline{z})}$ for which 
        \[
        \det(V_{k,2k-1}((y_{(s-1)(2k-1)+1}, \ldots, y_{s(2k-1) }), (x_1, \ldots, x_{2k-1}))) = 0
        \]
        \item There exists distinct $i,j \in [2k-1]$ for which $x_i = x_j$
        \item We have that $\{x_1, \ldots,x_{2k-1}\} \cap \{z_1, \ldots,z_{(s-1)(2k-1)} \} \neq \emptyset$.
    \end{itemize}
    If $\mathcal{B} \neq \Fq^{2k-1}$, pick $\underline{u}\in \Fq^{2k-1} \setminus \mathcal{B}$ and $\underline{u}'\in \Fq^{\ell - s(2k-1)}$ such that the vector $(z_1, \ldots,z_{(s-1)(2k-1)}) \circ \underline{u} \circ \underline{u}'$ has pairwise distinct coordinates and then
    add it to  $\mathcal{A}_{s}^{(\underline{z})}$.  
    Otherwise, i.e., if $\mathcal{B} = \Fq^{2k-1}$, exit the current loop and continue to the next $\underline{z}\in \mathcal{A}_{s-1}$.

    When we iterated over all $\underline{z}\in \mathcal{A}_{s-1}$, set
    \[
    \mathcal{A}_s = \bigcup_{\underline{z}\in \mathcal{A}_{s-1}} \mathcal{A}_{s}^{(\underline{z})}
    \]
    and continue to the next $s$.

    \paragraph{Step 3:}
    Output $\mathcal{A} =\mathcal{A}_t$.
\end{framed}

Our goal now is to prove the correctness of this algorithm and to lower bound the size of its output.
We first prove the following simple lemma

\begin{lemma}\label{lem:pol}
Let  $\alpha_1,\ldots,\alpha_{2k-1}$ be pairwise distinct elements of $\F_q$ and let $X_1, \ldots,X_{2k-1}$ be formal variables. Define the polynomial
\[ P(X_1,\ldots,X_{2k-1}) :=
    \det\left(\begin{array}{cccccccccc}
    1 & \alpha_1 & \alpha_1^2 & \cdots & \alpha_1^{k-1} & X_1 & X_1^2 & \cdots & X_1^{k-1} \\
    1 & \alpha_{2} & \alpha_{2}^2 & \cdots & \alpha_{2}^{k-1} & X_2 & X_2^2 & \cdots & X_2^{k-1} \\
     \vdots & & & & & & & & \vdots \\
    1 & \alpha_{{2k-1}} & \alpha_{{2k-1}}^2 & \cdots & \alpha_{{2k-1}}^{k-1} & X_{{2k-1}} & X_{{2k-1}}^2 & \cdots & X_{{2k-1}}^{k-1} \\
    \end{array}\right),
    \]
    over the ring $\Fq[X_1, \ldots, X_{2k-1}]$. Then, $P(X_1, \ldots, X_{2k-1})$ is a nonzero polynomial of total degree $k(k-1)/2$. 
    In particular, the number of solutions to $P(X_1,\ldots,X_{2k-1})=0$ in $\F_q^{2k-1}$ is at most 
\[ \frac{k(k-1)}2 q^{2k-2}. \]
\end{lemma}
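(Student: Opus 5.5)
The plan is to expand the determinant by the Leibniz formula. This gives the degree directly, and it isolates one monomial whose coefficient is visibly nonzero. The counting statement then follows from the Schwartz--Zippel lemma.

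\emph{Degree.} Index the columns so that columns $1,\dots,k$ are the constant columns $\alpha_i^0,\dots,\alpha_i^{k-1}$ and columns $k+1,\dots,2k-1$ are $X_i^1,\dots,X_i^{k-1}$. A Leibniz term $\operatorname{sgn}(\sigma)\prod_{c}M_{\sigma(c),c}$ takes exactly one entry from each column. The constant columns contribute only field elements, and the column $X^j$ contributes a factor $X_{\sigma(k+j)}^j$. Hence every term that does not vanish identically is a monomial of total degree exactly $1+2+\cdots+(k-1)=k(k-1)/2$. So $P$ is homogeneous of degree $k(k-1)/2$, provided it is nonzero.

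\emph{Nonvanishing.} Consider the monomial $m=X_{k+1}^{1}X_{k+2}^{2}\cdots X_{2k-1}^{k-1}$. Any term of the expansion contributes a monomial $\prod_{j=1}^{k-1}X_{\sigma(k+j)}^{j}$ with the $\sigma(k+j)$ pairwise distinct. Such a term equals $m$ (up to a scalar) only if $\sigma(k+j)=k+j$ for every $j$: each variable appears in only one row, and its exponent determines the column. The permutations with this property are exactly those that map the constant columns bijectively onto rows $1,\dots,k$. Their total contribution is therefore $\pm\det\bigl(\alpha_i^{c}\bigr)_{i\in[k],\,0\le c\le k-1}$ times $m$. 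This coefficient is a nonzero Vandermonde determinant, because $\alpha_1,\dots,\alpha_k$ are pairwise distinct. Equivalently, one can apply a generalized Laplace expansion along the last $k-1$ columns and read off the same coefficient. Thus $P\neq 0$ and $\deg P=k(k-1)/2$.

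\emph{Counting.} By the Schwartz--Zippel lemma, a nonzero polynomial of total degree $d$ in $2k-1$ variables over $\F_q$ has at most $d\,q^{2k-2}$ zeros in $\F_q^{2k-1}$. This holds with no restriction relating $d$ and $q$, since for $d\ge q$ the bound is trivial. Taking $d=k(k-1)/2$ gives the claimed bound.

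The only step requiring care is the nonvanishing. In particular, one must check that no other permutation also produces the monomial $m$, which would risk cancellation. The argument above handles this by observing that each variable $X_i$ lives in a single row and that the exponents in $m$ are pairwise distinct.
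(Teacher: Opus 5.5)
Your proof is correct and follows essentially the same route as the paper: a Leibniz expansion showing every term has degree $\binom{k}{2}$, a monomial whose coefficient is a nonzero Vandermonde determinant, and then Schwartz--Zippel. The only difference is cosmetic. The paper isolates $X_1^{k-1}X_2^{k-2}\cdots X_{k-1}$, whose coefficient is the Vandermonde determinant on $\alpha_k,\ldots,\alpha_{2k-1}$, where you use $X_{k+1}X_{k+2}^2\cdots X_{2k-1}^{k-1}$; you also spell out the no-cancellation check that the paper leaves as ``easy to see''.
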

\begin{proof}
    Consider the monomial $X_1^{k-1}X_2^{k-2}\cdots X_{k-1}$. It is easy to see that its coefficient in $P(X_1, \ldots, X_{2k-1})$, up to sign, is 
    \[ 
    \det\left(\begin{array}{cccccccccc}
    1 & \alpha_k & \alpha_k^2 & \cdots & \alpha_k^{k-1} \\
    1 & \alpha_{k+1} & \alpha_{k+1}^2 & \cdots & \alpha_{k+1}^{k-1} \\
     \vdots & & & & \vdots \\
    1 & \alpha_{{2k-1}} & \alpha_{{2k-1}}^2 & \cdots & \alpha_{{2k-1}}^{k-1} 
    \end{array}\right) 
    \]
    which is nonzero since it is the determinant of a Vandermonde matrix on pairwise distinct elements of $\F_q$. 
    Therefore, $P$ is a nonzero polynomial and $\deg(P)\geq {k(k-1)}/2$.
    Moreover, every term in the determinant uses each of the $k-1$ variable columns exactly once. 
    Hence every such term has total degree $\sum_{i=1}^{k-1}i = \binom{k}{2}$.
    The bound on the number of zeros now follows from the Schwartz–Zippel lemma \cite[Corollary 1]{schwartz1980fast}.
\end{proof}

We are now ready to prove the correctness of our algorithm and also provide an estimate of the number of vectors it constructs.

\begin{theorem}\label{thm:recursive-construction}
    Let $k$ and $\ell$ be positive integers such that $k \geq 2$ and $\ell\geq 2k-1$, and set $t=\lfloor\ell/(2k-1)\rfloor$.
    Assume that $q > \binom{2k-1}{2} + (2k-1)\cdot \ell$ is a prime power.
    Then, applying Algorithm 1 on any input vector $\underline{z}$ with pairwise distinct coordinates, outputs a set $\mathcal{A}$ of evaluation vectors such that $\RS_{\ell,k}(\underline{\alpha}) \cap \RS_{\ell,k}(\underline{\alpha}') = \langle \underline{1}\rangle_{\Fq}$ for all distinct $\underline{\alpha}, \underline{\alpha}'\in \mathcal{A}$. Furthermore,
    \[
    |\mathcal{A}| \geq \left( \frac{q - \binom{2k-1}{2} - (2k-1)^2(t-1)}{\binom{k}{2}} \right)^t \;.
    \]
    In other words, for fixed $k$ and $\ell$, Algorithm 1 constructs an $(\ell,k,q;1)$ RS sunflower with $\Omega_{\ell,k} (q^{\lfloor\ell/(2k-1)\rfloor})$ petals.
\end{theorem}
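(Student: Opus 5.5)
Correctness and the size bound will be handled separately.

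\emph{Correctness.} Take distinct $\underline{\alpha},\underline{\alpha}'\in\mathcal{A}=\mathcal{A}_t$. Each output vector has pairwise distinct coordinates by construction: the suffix $\underline{u}'$ is always chosen to make this so, and for $s\ge 2$ the third condition defining $\mathcal{B}$ keeps the new block disjoint from the inherited prefix. By \Cref{thm:constr}, it suffices to show $\rk V_{k,\ell}(\underline{\alpha},\underline{\alpha}')=2k-1$.

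Let $s$ be the first block in which $\underline{\alpha}$ and $\underline{\alpha}'$ differ. By the tree structure of the algorithm, they share a common ancestor $\underline{z}\in\mathcal{A}_{s-1}$ (or lie in $\mathcal{A}_1$ when $s=1$). Their ancestors $\underline{\beta},\underline{\beta}'\in\mathcal{A}_s^{(\underline{z})}$ are distinct, and they agree with $\underline{\alpha},\underline{\alpha}'$ on the first $s(2k-1)$ coordinates. I need two facts here:
\begin{itemize}
\item later steps never change earlier blocks, since descendants keep the prefix;
\item the seed $\underline{z}$ itself belongs to $\mathcal{A}_s^{(\underline{z})}$.
\end{itemize}
Whichever of $\underline{\beta},\underline{\beta}'$ was added later was chosen outside $\mathcal{B}$. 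Hence the $(2k-1)\times(2k-1)$ submatrix of $V_{k,\ell}(\underline{\alpha},\underline{\alpha}')$ on the rows of block $s$ is nonsingular, up to swapping the roles of $X$ and $Y$. A row-subset of full rank gives full column rank of the whole matrix.

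One subtlety is the symmetry. Swapping arguments turns $V(\underline{y},\underline{x})$ into $V(\underline{x},\underline{y})$ up to a column permutation, which does not affect the determinant being zero. So the condition checked against previously added $\underline{y}$ suffices.

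\emph{Size bound.} In each inner loop, count the set $\mathcal{B}$ while the current family has size $m$:
\begin{itemize}
\item By \Cref{lem:pol}, applied with $\alpha$ equal to the block of each existing $\underline{y}$ (these entries are distinct), the determinant condition contributes at most $m\binom{k}{2}q^{2k-2}$ vectors.
\item Repeated coordinates contribute at most $\binom{2k-1}{2}q^{2k-2}$.
\item Collisions with the prefix $z_1,\dots,z_{(s-1)(2k-1)}$ contribute at most $(2k-1)\cdot(s-1)(2k-1)\,q^{2k-2}\le (2k-1)^2(t-1)q^{2k-2}$.
\end{itemize}
Thus $\mathcal{B}\ne\F_q^{2k-1}$ as long as
\[
m\binom{k}{2}+\binom{2k-1}{2}+(2k-1)^2(t-1)<q,
\]
so the loop runs until $m\ge\bigl(q-\binom{2k-1}{2}-(2k-1)^2(t-1)\bigr)/\binom{k}{2}=:M$. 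Each node therefore has at least $M$ children, and $|\mathcal{A}_t|\ge M^t$ by induction on $s$. Distinct children of distinct parents stay distinct since their prefixes differ.

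I must also check that a suffix $\underline{u}'$ always exists. It needs at most $\ell$ distinct values, and $q>\binom{2k-1}{2}+(2k-1)\ell\ge\ell$, so this holds.

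The main obstacle is bookkeeping rather than ideas. First, the last inequality in \Cref{lem:pol} must be applied to the Schwartz–Zippel count in the right form. Second, it must be verified that the hypothesis $q>\binom{2k-1}{2}+(2k-1)\ell$ makes $M$ positive, or at least that the bound is meaningful. The asymptotic statement $\Omega_{\ell,k}(q^t)$ then follows immediately for fixed $k,\ell$.
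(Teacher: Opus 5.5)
Your proposal is correct and follows essentially the same route as the paper: a nonsingular $(2k-1)\times(2k-1)$ block at the level where the two lineages split gives a full-rank row submatrix of $V_{k,\ell}(\underline{\alpha},\underline{\alpha}')$, so \Cref{thm:constr} applies. The size bound is then the same union bound on $\mathcal{B}$ via \Cref{lem:pol}, giving at least $M$ children per node.

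The checks you flag are immediate. For instance, $M>0$ because $(2k-1)^2(t-1)\le(2k-1)(\ell-2k+1)<(2k-1)\ell$. Also, if you define $s$ as the level where the ancestor chains of $\underline{\alpha},\underline{\alpha}'$ first diverge, rather than as the first differing block, the correctness step no longer needs the distinct-prefix property.
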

\begin{proof}
    For every $s\in [t]$ and every two distinct $\underline{\alpha}, \underline{\alpha}' \in \mathcal{A}_s$ the following holds:
    \begin{enumerate}
        \item They have distinct prefixes of length $s\cdot (2k-1)$. Namely,
        \[
        \left(\alpha_1, \ldots, \alpha_{s(2k-1)} \right) \neq \left( \alpha_1', \ldots, \alpha_{s(2k-1)}'\right) \;.
        \]
        \item There exists $r\in [s]$ such that the first $(r-1)(2k-1)$ coordinates of $\underline{\alpha}$ and $\underline{\alpha}'$ coincide and 
        \[
        \det\left(V_{k,2k-1} \left( \left(\alpha_{(r-1)(2k-1) + 1}, \ldots, \alpha_{r(2k-1)}\right), \left(\alpha_{(r-1)(2k-1)+1}', \ldots, \alpha_{r(2k-1)}'\right) \right) \right) \neq 0 \;.
        \]
    \end{enumerate}
    Indeed, if $\underline{\alpha}, \underline{\alpha}'$ belong to the same family $\mathcal{A}_{s}^{\underline{z}}$ for some $\underline{z}$, then, it must be that 
    \[
    \det\left(V_{k,2k-1} \left( \left(\alpha_{(s-1)(2k-1) + 1}, \ldots, \alpha_{s(2k-1)}\right), \left(\alpha_{(s-1)(2k-1)+1}', \ldots, \alpha_{s(2k-1)}'\right) \right) \right) \neq 0 \;,
    \]
    which implies that 
    \[
    \left(\alpha_{(s-1)(2k-1)+1}, \ldots, \alpha_{s(2k-1)} \right) \neq \left( \alpha_{(s-1)(2k-1)+1}', \ldots, \alpha_{s(2k-1)}'\right)
    \]
    by the definition of the matrix $V_{k,2k-1}$ given in \Cref{matrix}. This implies that the prefixes of $\underline{\alpha}$ and $\underline{\alpha}'$ of length $s(2k-1)$ are distinct.
    If $\underline{\alpha}$ belong to families rooted at distinct $\underline{z},\underline{z}' \in \mathcal{A}_{s-1}$, they were already separated at an earlier block of length $2k-1$ by induction.
    The block in item~2 is a nonsingular $(2k-1)\times(2k-1)$ row submatrix of $V_{k,\ell}(\underline{\alpha},\underline{\alpha}')$. Hence, this matrix has rank $2k-1$, and \Cref{thm:constr} shows that $\mathcal{A}$ yields an $(\ell,k,q;1)$ RS sunflower of size $|\mathcal{A}|$.
    
    Our next objective is to prove the lower bound on the size of $\mathcal{A}$. 
    We first lower bound the number of evaluation vectors generated in Step~$1$. 
    Namely, we lower bound the size of $\mathcal{A}_1$.
    Clearly, the number of vectors $\underline{x}\in \Fq^{2k-1}$ such that there are distinct $i,j\in[2k-1]$ for which $x_i = x_j$ is at most $\binom{2k-1}{2} \cdot q^{2k-2}$. 
    Additionally, by \Cref{lem:pol}, for a fixed $(y_1, \ldots,y_{2k-1})\in \Fq^{2k-1}$, the number of solutions to \[
    \det(V_{k,2k-1} (y_1, \ldots,y_{2k-1}), (X_1, \ldots,X_{2k-1})) = 0
    \]
    is at most $\binom{k}{2} q^{2k-2}$. Therefore, at the worst case scenario, every vector we add to $\mathcal{A}_1$ increases the set of bad vectors $\mathcal{B}$ by at most $\binom{k}{2} q^{2k-2}$.
    Furthermore, note that as long as $\mathcal{B} \neq \Fq^{2k-1}$ and we pick any $u \in \Fq^{2k-1} \setminus \mathcal{B}$, we can always choose some $\underline{u}' \in \Fq^{\ell - (2k-1)}$ so that the concatenation $\underline{u} \circ \underline{u}'$ has all coordinates distinct. The reason is that $q > \ell$, which guarantees a sufficient number of field elements to avoid any repeated coordinates.
    The process ends when $|\mathcal{B}| = q^{2k-1}$, and thus, by the union bound, 
    \[
    \binom{2k-1}{2} q^{2k-2} + |\mathcal{A}_1| \binom{k}{2} q^{2k-2} \geq |\mathcal{B}| = q^{2k-1} \;.
    \]
    This implies that 
    \[
    |\mathcal{A}_1| \geq \frac{q - \binom{2k-1}{2}}{\binom{k}{2}}\;.
    \]
    
    Our next goal is to lower bound $\mathcal{A}_s$ for every $s\in [2,t]$. Again, note that the number of vectors $\underline{x}\in \Fq^{2k-1}$ such that there are distinct $i,j\in[2k-1]$ for which $x_i = x_j$ is at most $\binom{2k-1}{2} \cdot q^{2k-2}$. 
    However, by the third condition, the set $\mathcal{B}$ also contains all $\underline{x}\in \Fq^{2k-1}$ where $\{x_1, \ldots, x_{2k-1}\} \cap \{z_1, \ldots, z_{(s-1)(2k-1)}\} \neq \emptyset$ where $\underline{z}$ is fixed. 
    This adds at most $(2k-1)\cdot (s-1)(2k-1)\cdot q^{2k-2}$ vectors to $\mathcal{B}$. Therefore, any vector that is added to $\mathcal{A}_s$ increases the number of bad vectors by at most $\binom{k}{2}q^{2k-2}$.
    Finally, as before, for any fixed $(y_1, \ldots,y_{2k-1})\in \Fq^{2k-1}$, the number of solutions to $\det(V_{k,2k-1}((y_1, \ldots,y_{2k-1}), (X_1, \ldots, X_{2k-1})))=0$ is at most $\binom{k}{2} q^{2k-2}$. 
    Moreover, as in the previous case, because $q > \ell$, we can always choose some $\underline{u}' \in \Fq^{\ell - s(2k-1)}$ so that the vector we append to $\mathcal{A}_s^{\underline{z}}$ has no repeated entries in its coordinates.
    We stop adding elements to $\mathcal{A}_{s}^{(\underline{z})}$ when $|\mathcal{B}| = q^{2k-1}$. Thus,
    \[
    \binom{2k-1}{2} q^{2k-2} + (2k - 1)^2 (s-1)q^{2k-2} + \left|\mathcal{A}_{s}^{(\underline{z})}\right| \binom{k}{2}q^{2k-2} \geq q^{2k-1} \;.
    \]
    Rearranging, we get that 
    \[
    \left|\mathcal{A}_{s}^{(\underline{z})}\right| \geq \frac{q - \binom{2k-1}{2} - (2k-1)^2(s-1)}{\binom{k}{2}}\;.
    \]
    Now, since for any two distinct $\underline{z}, \underline{z}'\in \mathcal{A}_{s-1}$, by our construction, we have that $\mathcal{A}_{s}^{(\underline{z})} \cap \mathcal{A}_{s}^{(\underline{z}')} = \emptyset$. Indeed, recall that every vector in $\mathcal{A}_{s}^{(\underline{z})}$ has $(z_1, \ldots,z_{(s-1)(2k-1)})$ as its prefix which is distinct from any other prefix of other $\underline{z}'\in \mathcal{A}_{s-1}$.
    Thus, 
    \[
    |\mathcal{A}_s| \geq |\mathcal{A}_{s-1}| \cdot \frac{q - \binom{2k-1}{2} - (2k-1)^2(s-1)}{\binom{k}{2}} \;.
    \]

    Thus, we have
    \[
    \left|\mathcal{A}_t\right| \geq \prod_{s=1}^{t} \frac{q - \binom{2k-1}{2} - (2k-1)^2(s-1)}{\binom{k}{2}} \geq \left( \frac{q - \binom{2k-1}{2} - (2k-1)^2(t-1)}{\binom{k}{2}} \right)^t\;,
    \]
    and the theorem follows by the choice of $t$.
\end{proof}

We now argue about the complexity of our recursive construction.

\begin{proposition}
    Algorithm $1$ can be implemented deterministically using 
    \[
    O_{\ell, k}\left(|\mathcal{A}| \cdot q^{2k-1} \right)
    \]
    field operations over $\Fq$ and equality tests.
\end{proposition}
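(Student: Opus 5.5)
Our plan is to charge the work of Algorithm 1 to the calls of the inner ``Repeat'' loop. Each call either adds a new vector to some family $\mathcal{A}_s^{(\underline{z})}$ or terminates a loop. The key observation is that we never need to materialize $\mathcal{B}$ as a set. To make one decision (find some $\underline{u}\notin\mathcal{B}$, or certify $\mathcal{B}=\Fq^{2k-1}$), we scan all $q^{2k-1}$ candidates $\underline{x}\in\Fq^{2k-1}$ in a fixed order. For each candidate we test membership in $\mathcal{B}$.

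For the membership test, we implement it incrementally so that each candidate costs $O_{\ell,k}(1)$ amortized operations. We maintain, for each loop, a Boolean table $T$ indexed by $\Fq^{2k-1}$ marking the bad vectors. At initialization we mark the vectors with a repeated coordinate and, in Step 2, those meeting $\{z_1,\ldots,z_{(s-1)(2k-1)}\}$. This takes $O_{k,\ell}(q^{2k-1})$ equality tests. Each time a new block $\underline{y}$ is appended, we evaluate $\det V_{k,2k-1}(\underline{y},\underline{x})$ for every unmarked $\underline{x}$, at cost $O_k(1)$ field operations per determinant via Gaussian elimination on a $(2k-1)\times(2k-1)$ matrix, and mark the zeros. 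A new $\underline{u}$ is then found by continuing the scan from the last position, since marked entries never become unmarked. Choosing $\underline{u}'$ takes $O(\ell^2)$ equality tests, greedily picking field elements not yet used, which is possible because $q>\ell$.

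Next we sum the costs. A single family $\mathcal{A}_s^{(\underline{z})}$ with $m$ elements costs $O_{k,\ell}(q^{2k-1})$ for initialization plus $O_{k}(m\,q^{2k-1})$ for the $m$ determinant sweeps, so $O_{k,\ell}(m\,q^{2k-1})$ in total, using $m\ge1$. Summing over $\underline{z}\in\mathcal{A}_{s-1}$ gives $O_{k,\ell}(|\mathcal{A}_s|\,q^{2k-1})$ for level $s$. Since every family contains its root $\underline{z}$, we have $|\mathcal{A}_{s-1}|\le|\mathcal{A}_s|\le|\mathcal{A}_t|=|\mathcal{A}|$. Summing over the $t\le\ell$ levels then gives $O_{\ell,k}(|\mathcal{A}|\,q^{2k-1})$.

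The main obstacle is avoiding a naive cost of $|\mathcal{A}_s^{(\underline{z})}|$ determinants per candidate per new element, which would give a quadratic dependence on $|\mathcal{A}|$. The incremental marking table handles this, since each (family element, candidate) pair is examined exactly once. A secondary point is that the table uses $q^{2k-1}$ memory, but the statement only counts field operations and equality tests. Alternatively, one can avoid the table by storing, for each candidate, the index up to which it has been checked.
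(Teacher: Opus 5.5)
Your proposal is correct and follows essentially the same argument as the paper. Both enumerate $\Fq^{2k-1}$ exhaustively and keep a persistent record of candidates already known to be bad, so each new family element triggers a single determinant sweep over the remaining candidates. This gives $O_{k,\ell}(|\mathcal{A}_s|\,q^{2k-1})$ work per level and $O(t\,|\mathcal{A}|\,q^{2k-1})$ overall. Your bookkeeping (resuming the scan from the last position, charging each family's initialization to its root, and using $|\mathcal{A}_{s-1}|\le|\mathcal{A}_s|$) is somewhat more explicit than the paper's.
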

\begin{proof}
    For two vectors $\underline{u}, \underline{v} \in \Fq^{2k-1}$, verifying whether $\det(V_{k,2k-1}(\underline{u}, \underline{v})) = 0$ can be done in time $O(k^3)$.
 Specifically, we first precompute all powers $u_i^j, v_i^j$ for every $i \in [2k-1]$ and $j \in [k-1]$ in $O(k^2)$ time, and then evaluate the determinant of the resulting $(2k-1)\times(2k-1)$ matrix, which requires $O(k^3)$ time.

Given a vector $\underline{x} \in \Fq^{2k-1}$, checking whether $x_i = x_j$ for some distinct $i \neq j$ can be done with $O(k^2)$ equality comparisons. Moreover, testing whether $\underline{x}$ contains a coordinate that already occurs in a prefix of $\underline{z}$ can be performed in $O(\ell \cdot k)$ equality tests.

A naive exhaustive search over all $q^{2k-1}$ vectors in $\Fq^{2k-1}$, combined with checking all three conditions, would therefore require $O((k^3 + \ell \cdot k) q^{2k-1})$ field operations and equality tests.

Observe, however, that when constructing $\mathcal{A}_{s}^{(\underline{z})}$ (or $\mathcal{A}_1$ in the initial step), we do not recompute $\mathcal{B}$ from the beginning each time a new vector is added. 
Instead, we keep a record of all vectors that have previously failed at least one of the conditions; for these vectors we skip the determinant computation in subsequent steps.

Consequently, the overall running time to construct $\mathcal{A}$ is bounded by
\[
O\left(\sum_{j=1}^{t} |\mathcal{A}_j| \cdot (k^3 + \ell \cdot k) q^{2k-1}\right) \leq O\left(t \, |\mathcal{A}_t| \cdot (k^3 + \ell k) q^{2k-1}\right)\;,
\]
where the inequality follows from $|\mathcal{A}_j|\leq|\mathcal{A}_t|$. Since $t=\lfloor\ell/(2k-1)\rfloor$, the proposition follows.
\end{proof}

\subsection{Existence of subspace MDS-codes and RS sunflowers}

In this subsection, we prove two existence results. First, we construct a large family of MDS codes whose pairwise intersections have dimension at most $u$. Then, we apply the same argument within the family of RS codes to obtain an $(n,k,q;1)$ RS sunflower with $\Omega_{k,n}(q^{n-2k+2})$ petals. 
Throughout this subsection, $k\geq2$ and $n\geq2k-1$ are fixed integers, $q\geq n$ is a prime power, and all asymptotics are taken as $q\to\infty$.

Let $\cG_q(k,n)$ be the Grassmannian, i.e., the set of subspaces of dimension $k$ in $\F_q^n$. In the sequel, we let $$\qbin{a}{b}{q}= \prod_{i=0}^{b-1}\frac{\left(q^a-q^i\right)}{\left(q^b-q^i\right)}$$
be the $q$-binomial coefficient of integers $a \ge b \ge 0$; see e.g.~\cite{stanley2011enumerative}. It is well-known that $\qbin{a}{b}{q}$ counts the number of distinct $b$-dimensional subspaces of an $a$-dimensional vector space over~$\F_q$. Therefore, $|\cG_q(k,n)|=\qbin{n}{k}{q}$.
We use the convention $\qbin{a}{b}{q}=0$ when $b<0$ or $b>a$.

We denote by $\cM_q(k,n)$ the set of MDS codes in $\cG_q(k,n)$ and write $M_q(k, n) := |\cM_q(k,n)|$.

We first borrow the following theorem from \cite{gruica2022common}.

\begin{theorem}[{Theorem 3.8 in \cite{gruica2022common}}]\label{thm:cone-count}
    Let $K\subseteq\Fq^n$ be closed under multiplication by scalars in $\Fq$. Furthermore, assume that $|K| \geq q$, $n\geq 3$ and $1 \leq k\leq n-1$.
    Denote by $\mathcal{F}_k^K$ the collection of all subspaces of $\Fq^n$ of dimension $k$ that intersect $K$, i.e., $K \cap W \neq \{\underline{0}\}$. Then,
    \[
    \frac{\frac{|K|-1}{q-1} \cdot \qbin{n-1}{k-1}{q}}{1 + \left( \frac{|K|-1}{q-1} - 1\right) \left( \frac{q^{k-1} - 1}{q^{n-1} - 1}\right)} \leq \left| \mathcal{F}_k^K\right| \leq \frac{|K|-1}{q-1} \cdot \qbin{n-1}{k-1}{q}
    \]
\end{theorem}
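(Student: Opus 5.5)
The upper bound follows from a union bound over projective points of $K$. The lower bound comes from Bonferroni's inequality in its Cauchy--Schwarz form, i.e.\ the second-moment method. Write $N:=\frac{|K|-1}{q-1}$ for the number of one-dimensional subspaces $\langle v\rangle$ with $v\in K\setminus\{\underline{0}\}$; this is an integer because $K$ is closed under scalar multiplication. For $W\in\cG_q(k,n)$ let
\[
X(W):=\bigl|\{\text{points } P\subseteq K\text{ with } P\subseteq W\}\bigr|,
\]
so that $W\in\mathcal{F}_k^K$ exactly when $X(W)\ge 1$.

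For the upper bound, each point $P$ lies in exactly $\qbin{n-1}{k-1}{q}$ subspaces of dimension $k$, since these correspond to $(k-1)$-subspaces of $\F_q^n/P$. Summing over the $N$ points and discarding multiplicities gives
\[
|\mathcal{F}_k^K|\le \sum_W X(W)=N\qbin{n-1}{k-1}{q}.
\]

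For the lower bound, apply Cauchy--Schwarz to $X=X\cdot\mathbf 1_{X\ge1}$:
\[
\Bigl(\sum_W X(W)\Bigr)^2\le |\mathcal{F}_k^K|\sum_W X(W)^2 .
\]
The first moment is $S_1=N\qbin{n-1}{k-1}{q}$. The second moment counts pairs $(P,P')$ of points of $K$ together with $W\supseteq P+P'$. Diagonal pairs contribute $S_1$. Each of the $N(N-1)$ off-diagonal pairs spans a $2$-space contained in $\qbin{n-2}{k-2}{q}$ subspaces $W$. Hence
\[
S_2=N\qbin{n-1}{k-1}{q}+N(N-1)\qbin{n-2}{k-2}{q}.
\]
Using $\qbin{n-2}{k-2}{q}\big/\qbin{n-1}{k-1}{q}=\frac{q^{k-1}-1}{q^{n-1}-1}$, the quotient $S_1^2/S_2$ simplifies to
\[
\frac{N\qbin{n-1}{k-1}{q}}{1+(N-1)\frac{q^{k-1}-1}{q^{n-1}-1}},
\]
which is exactly the claimed lower bound. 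The convention $\qbin{a}{b}{q}=0$ handles $k=1$, where the correction term vanishes.

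The only delicate step is checking this ratio identity. It follows from the product formula: $\qbin{n-1}{k-1}{q}=\frac{q^{n-1}-1}{q^{k-1}-1}\qbin{n-2}{k-2}{q}$. The hypotheses $|K|\ge q$ (so $N\ge1$) and $n\ge3$ only ensure that the counts are well defined and nondegenerate.
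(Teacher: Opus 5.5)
Your proof is correct and complete. The upper bound is the union bound over the $N=\frac{|K|-1}{q-1}$ projective points of $K$. For the lower bound, Cauchy--Schwarz gives $|\mathcal{F}_k^K|\ge S_1^2/S_2$, with $S_2=N\qbin{n-1}{k-1}{q}+N(N-1)\qbin{n-2}{k-2}{q}$. The identity $\qbin{n-2}{k-2}{q}=\frac{q^{k-1}-1}{q^{n-1}-1}\qbin{n-1}{k-1}{q}$, which also holds for $k=1$ via the zero convention, then yields exactly the stated bound. The paper gives no proof of this theorem; it imports it from \cite{gruica2022common}. The lower bound there has the same $S_1^2/S_2$ form, which is what a second-moment (Cauchy--Schwarz) count on the incidence graph between points of $K$ and $k$-subspaces produces, so your argument appears to reconstruct the cited proof rather than offer a different route.
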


By setting $K$ appropriately in the theorem, we get upper and lower bounds on the number of MDS codes of dimension $k$.
\begin{lemma} \label{lem:mds}
Let $b_q(n,r):=\sum_{i=0}^{r}\binom{n}{i}(q-1)^i$ denote the size of a Hamming ball of radius $r$ in $\F_q^n$. We have
\begin{align*}
   \qbin{n}{k}{q}-\frac{b_q(n,n-k)-1}{q-1}\qbin{n-1}{k-1}{q}\leq M_q(k,n)&\leq\qbin{n}{k}{q}-\frac{\frac{b_q(n,n-k)-1}{q-1}\qbin{n-1}{k-1}{q}}{1+\left(\frac{b_q(n,n-k)-1}{q-1}-1 \right)\left( \frac{q^{k-1}-1}{q^{n-1}-1}\right)}.
\end{align*}

\end{lemma}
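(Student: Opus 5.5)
The plan is to express non-MDS codes through Theorem~\ref{thm:cone-count}, using the cone $K$ of low-weight vectors. We take
\[
K:=\{\underline{x}\in\F_q^n : \mathrm{wt}(\underline{x})\le n-k\},
\]
which is the Hamming ball of radius $n-k$ around $\underline{0}$. It is closed under scalar multiplication, since multiplying by a nonzero scalar preserves weight and multiplying by $0$ gives $\underline{0}\in K$.

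The key fact is that a $k$-dimensional code $W\subseteq\F_q^n$ is MDS if and only if its minimum distance is $n-k+1$. Equivalently, $W$ has no nonzero codeword of weight at most $n-k$, that is, $W\cap K=\{\underline{0}\}$. Hence the non-MDS codes in $\cG_q(k,n)$ are exactly the members of $\mathcal{F}_k^K$, and
\[
M_q(k,n)=\qbin{n}{k}{q}-|\mathcal{F}_k^K|.
\]

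Next we check the hypotheses of Theorem~\ref{thm:cone-count}. We have $|K|=b_q(n,n-k)$, and $|K|\ge q$ because $K$ already contains the $q$ multiples of a weight-one vector when $n-k\ge1$. The condition $1\le k\le n-1$ holds in our setting, where $k\ge2$ and $n\ge 2k-1\ge3$. For the general statement we would assume $n\ge3$ and $k\le n-1$ as in that theorem.

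Substituting $|K|=b_q(n,n-k)$ into the two-sided bound on $|\mathcal{F}_k^K|$ and subtracting from $\qbin{n}{k}{q}$ gives both inequalities. The upper bound on $|\mathcal{F}_k^K|$ yields the lower bound on $M_q(k,n)$, and the lower bound on $|\mathcal{F}_k^K|$ yields the upper bound on $M_q(k,n)$.

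The only nontrivial step is the identification of MDS codes with subspaces avoiding $K$, which is the Singleton bound characterization. The remaining work is bookkeeping: confirming $|K|\ge q$ and the range of $k$ and $n$ needed by Theorem~\ref{thm:cone-count}.
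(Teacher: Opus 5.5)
Your proposal is correct and matches the paper's proof. The paper also takes $K$ to be the vectors with at least $k$ zero coordinates (weight at most $n-k$), identifies the non-MDS codes with $\mathcal{F}_k^K$, and substitutes $|K|=b_q(n,n-k)$ into Theorem~\ref{thm:cone-count}, with your explicit check of $|K|\ge q$, $n\ge 3$ and $1\le k\le n-1$ being slightly more careful than its one-line justification.
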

\begin{proof}
    Let $K$ be the set of vectors in $\Fq^n$ with at least $k$ zero coordinates. Then $|K|=b_q(n,n-k)\geq q$, since $n-k\geq1$, and $K$ is closed under scalar multiplication. An $[n,k]_q$ code is MDS if and only if every nonzero codeword has at most $k-1$ zero coordinates. Consequently, $ M_q(k,n)=\qbin{n}{k}{q}-|\mathcal{F}_k^K|$.
    The bounds now follow from Theorem~\ref{thm:cone-count}.
\end{proof}

For fixed $n$ and $k$, the term subtracted in the lower bound of Lemma~\ref{lem:mds} is $O_{k,n}(q^{k(n-k)-1})$. Thus, the standard asymptotics of Gaussian coefficients, namely $\qbin{a}{b}{q}=q^{b(a-b)}(1+o(1))$, together with the trivial upper bound $M_q(k,n)\leq\qbin{n}{k}{q}$, give
\[
M_q(k,n)=q^{k(n-k)}\left(1+o(1)\right).
\]
We next recall the number of subspaces having a prescribed intersection dimension with a fixed subspace.

\begin{lemma}[{Theorem 5 in \cite{koetter2008coding}}] \label{lem:injball}
For $\cC\in \cG_q(k,n)$, the number of $\cD \in \cG_q(k,n)$ with $\dim(\cC \cap \cD)=r$ is
\begin{align*}
    q^{(k-r)^2}\qbin{k}{r}{q}\qbin{n-k}{k-r}{q}.
\end{align*}
\end{lemma}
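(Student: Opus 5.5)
The plan is a direct double count: first choose the intersection, then count the possible complements in a quotient. Fix $\cC\in\cG_q(k,n)$ and an integer $r$ with $0\le r\le k$. Every $\cD$ with $\dim(\cC\cap\cD)=r$ determines the $r$-dimensional subspace $W:=\cC\cap\cD$ of $\cC$. So the count is
\[
\sum_{W\in\cG(r,\cC)} \#\{\cD\in\cG_q(k,n) : \cC\cap\cD=W\}.
\]
There are $\qbin{k}{r}{q}$ choices of $W$. It remains to show that the inner count does not depend on $W$ and equals $q^{(k-r)^2}\qbin{n-k}{k-r}{q}$.

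For the inner count, pass to the quotient $\pi:\F_q^n\to\F_q^n/W\cong\F_q^{n-r}$. The map $\cD\mapsto\pi(\cD)$ is a bijection from $k$-subspaces containing $W$ onto $(k-r)$-subspaces of $\F_q^{n-r}$. Under this bijection, $\cC\cap\cD=W$ holds exactly when $\pi(\cD)\cap\pi(\cC)=\{0\}$, where $A:=\pi(\cC)$ has dimension $k-r$. The problem therefore reduces to the following general count: in an $N$-dimensional space, for a fixed $a$-dimensional subspace $A$, count the $m$-dimensional subspaces meeting $A$ trivially. I will show this number is $q^{am}\qbin{N-a}{m}{q}$. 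Substituting $N=n-r$ and $a=m=k-r$ then gives $q^{(k-r)^2}\qbin{n-k}{k-r}{q}$, as required.

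To prove the general count, fix a complement $U$ of $A$, so that $\F_q^N=U\oplus A$, and let $p_U$ be the projection onto $U$ along $A$. Let $D$ be an $m$-subspace with $D\cap A=\{0\}$. Since $\ker(p_U)\cap D=\{0\}$, the space $P:=p_U(D)$ is an $m$-subspace of $U$. Moreover, $D$ is the graph $\{u+\varphi(u):u\in P\}$ of a unique linear map $\varphi:P\to A$. Conversely, every pair $(P,\varphi)$ with $P\in\cG(m,U)$ and $\varphi\in\mathrm{Hom}(P,A)$ gives an $m$-subspace whose graph structure forces trivial intersection with $A$. The two constructions are mutually inverse, so the count is $\qbin{N-a}{m}{q}\cdot|\mathrm{Hom}(P,A)|=\qbin{N-a}{m}{q}\,q^{am}$.

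The only point needing care is the bijectivity of this graph correspondence. Injectivity holds because $P$ and $\varphi$ are recovered from $D$ via $p_U$ and via the $A$-component of each vector. Surjectivity holds because a graph meets $A$ only in $0$. Beyond that, everything is bookkeeping, and the edge cases are handled by the stated convention $\qbin{a}{b}{q}=0$: for example, when $k-r>n-k$, no such $\cD$ exists and the formula correctly gives $0$.
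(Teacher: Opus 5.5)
Your argument is correct and complete. The paper gives no proof of this statement; it imports the count from Koetter--Kschischang \cite{koetter2008coding}. So your quotient-and-graph argument supplies a self-contained proof rather than reproducing one from the text.

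The one step worth writing out explicitly is the equivalence in the quotient. Since $W=\ker\pi\subseteq\cD$, one has $\pi(\cC)\cap\pi(\cD)=\pi(\cC\cap\cD)=(\cC\cap\cD)/W$, and this vanishes exactly when $\cC\cap\cD=W$.

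A standard alternative proof is purely enumerative. For fixed $W$, count the ordered $(k-r)$-tuples that are linearly independent modulo $\cC$; there are $\prod_{j=0}^{k-r-1}(q^n-q^{k+j})$ of them. Divide by the number $\prod_{j=0}^{k-r-1}(q^k-q^{r+j})$ of such tuples lying in a given $\cD\supseteq W$, and simplify the quotient to $q^{(k-r)^2}\qbin{n-k}{k-r}{q}$.

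Your route avoids that algebraic simplification and explains the two factors structurally. The factor $q^{(k-r)^2}=|\mathrm{Hom}(\F_q^{k-r},\F_q^{k-r})|$ comes from the graph (affine-chart) parametrization of the complements of $A$. The factor $\qbin{n-k}{k-r}{q}$ counts the possible projections $P\subseteq U$. It also gives, at no extra cost, the general count $q^{am}\qbin{N-a}{m}{q}$ of $m$-spaces meeting a fixed $a$-space trivially. Finally, it covers the degenerate range $k-r>n-k$ automatically, since then $U$ has no $(k-r)$-dimensional subspaces.
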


For convenience, define
\[
A_q(k,n,u) := \sum_{i=0}^{k-u-1}
q^{i^2}\qbin{k}{i}{q}\qbin{n-k}{i}{q} -1.
\]
By Lemma~\ref{lem:injball}, $A_q(k,n,u)$ is the number of subspaces $\cD\ne\cC$ with $\dim(\cC\cap\cD)\geq u+1$, for a fixed $\cC\in\cG_q(k,n)$. The subtraction of $1$ excludes $\cC$ itself.
Note that for $u=k-1$, one has
$A_q(k,n,k-1)=0$, since two distinct $k$-dimensional subspaces cannot
intersect in dimension $k$. Thus, we will assume that $0\leq u\leq k-2$.

\begin{claim} \label{clm:A-q-k-n-u}
    Let $u,k,n$ be fixed integers such that $0\leq u\leq k-2$ and $n\geq2k-1$. As $q\to\infty$, we have
    \[
    A_q(k,n,u) = q^{(k-u-1)(n-k+u+1)}(1+o(1))\;.
    \]
\end{claim}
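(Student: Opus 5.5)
The plan is to find the dominant term of the finite sum defining $A_q(k,n,u)$ and show that every other term is of lower order. First I will use the Gaussian-coefficient asymptotics already used in the paper, $\qbin{a}{b}{q}=q^{b(a-b)}(1+o(1))$ for fixed $0\le b\le a$. Applied to the $i$-th summand, this gives
\[
q^{i^2}\qbin{k}{i}{q}\qbin{n-k}{i}{q}=q^{e(i)}(1+o(1)),\qquad e(i):=i^2+i(k-i)+i(n-k-i)=i(n-i).
\]
This requires $i\le n-k$ so that the second coefficient is nonzero. Since $i\le k-u-1\le k-1\le n-k$ (using $n\ge 2k-1$), this holds for every index in the sum.

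Next I will locate the maximum of $e(i)=i(n-i)$ over $0\le i\le k-u-1$. The function is increasing for $i<n/2$. The range satisfies $k-u-1\le k-1\le (n-1)/2<n/2$, so $e$ is strictly increasing there, and its maximum is attained uniquely at $i_0=k-u-1$. The value is
\[
e(i_0)=(k-u-1)(n-k+u+1),
\]
which is exactly the claimed exponent. All other summands are $O(q^{e(i_0)-1})$, because the exponents are integers. The number of summands is fixed ($k-u$), so their total is $o(q^{e(i_0)})$.

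Finally, the subtracted $1$ has to be absorbed. Since $u\le k-2$, we have $i_0\ge 1$, so $e(i_0)\ge n-1\ge 1$, and the constant $-1$ (together with the $i=0$ term, which equals $1$) is also $o(q^{e(i_0)})$. Collecting these gives $A_q(k,n,u)=q^{(k-u-1)(n-k+u+1)}(1+o(1))$.

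The only step needing care is the monotonicity argument. This is where the hypothesis $n\ge 2k-1$ is used: it ensures that both the maximizing index is the top of the range and that none of the Gaussian coefficients vanish. The rest is routine bookkeeping.
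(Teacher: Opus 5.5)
Your proposal is correct and follows essentially the same route as the paper. Both arguments apply $\qbin{a}{b}{q}=q^{b(a-b)}(1+o(1))$ termwise to get exponents $i(n-i)$, then use $n\ge 2k-1$ to show that $i(n-i)$ is strictly increasing on $0\le i\le k-u-1$, so the top index dominates. You are in fact slightly more careful than the paper, since you explicitly check that $\qbin{n-k}{i}{q}\neq 0$ and that the subtracted $1$ (together with the $i=0$ term) is negligible.
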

\begin{proof}
    It is well known that $\qbin{a}{b}{q} = q^{b(a - b)}(1 + o(1))$ where the $o(1)$ tends to $0$ as $q\to +\infty$. Thus, the summand indexed by $i$ in
    $A_q(k,n,u)$ has asymptotic order
    \[
    q^{i^2+i(k-i)+i(n-k-i)}(1 + o(1)) = q^{i(n-i)} (1 + o(1)).
    \]
    Since $u\leq k-2$, at least one summand remains. Moreover, $i(n-i)$ is strictly increasing for integers $0\leq i\leq k-u-1\leq k-1$ when $n\geq2k-1$. Hence, the dominant term corresponds to $i=k-u-1$.
    Therefore,
    \begin{equation*}
        A_q(k,n,u) = q^{(k-u-1)(n-k+u+1)}(1+o(1)).
    \end{equation*}
\end{proof}

We now apply greedy selection to obtain a family of MDS codes with small pairwise intersections.
\begin{proposition}\label{prop:MDS-lower-bound}
Let $k\geq2$, $n\geq2k-1$, and $0\leq u\leq k-2$ be fixed integers, and let $q\geq n$ be a prime power. There exists a set $\mathcal{M}\subseteq\mathcal{M}_q(k,n)$ such that $\dim(\cC\cap\cD)\leq u$ for every two distinct $\cC,\cD\in\mathcal{M}$ and
\[
|\mathcal{M}|\geq q^{(u+1)(n-2k+u+1)}(1+o(1)).
\]
\end{proposition}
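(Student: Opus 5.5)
We will argue by greedy selection inside $\mathcal{M}_q(k,n)$. Start with $\mathcal{R}=\mathcal{M}_q(k,n)$ and $\mathcal{M}=\emptyset$. While $\mathcal{R}\neq\emptyset$, pick any $\cC\in\mathcal{R}$, add it to $\mathcal{M}$, and delete from $\mathcal{R}$ the code $\cC$ together with every $\cD\in\mathcal{R}$ such that $\dim(\cC\cap\cD)\geq u+1$.

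The output has the required property. If $\cC$ was selected before $\cD$, then $\cD$ survived the deletion step triggered by $\cC$, so $\dim(\cC\cap\cD)\leq u$.

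Next we bound how much each step removes. By Lemma~\ref{lem:injball} and the definition of $A_q(k,n,u)$, each selection removes at most $1+A_q(k,n,u)$ elements of $\mathcal{R}$. This count is taken over the whole Grassmannian $\cG_q(k,n)$, which is an overcount for MDS codes, but that is harmless. Since the procedure stops only when $\mathcal{R}$ is empty,
\[
|\mathcal{M}|\geq \frac{M_q(k,n)}{1+A_q(k,n,u)}.
\]

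It remains to compute the asymptotics. From Lemma~\ref{lem:mds} and the remark after it, $M_q(k,n)=q^{k(n-k)}(1+o(1))$. From Claim~\ref{clm:A-q-k-n-u}, $A_q(k,n,u)=q^{(k-u-1)(n-k+u+1)}(1+o(1))$, which tends to infinity because $u\leq k-2$. Hence $1+A_q(k,n,u)$ has the same leading asymptotics.

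The exponent of the quotient is
\[
k(n-k)-(k-u-1)(n-k+u+1).
\]
Write $m=u+1$ and expand $(k-m)(n-k+m)=k(n-k)+km-m(n-k)-m^2$. The difference is then
\[
m(n-k)-km+m^2=m(n-2k+m)=(u+1)(n-2k+u+1),
\]
as claimed.

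The only delicate point is this algebra and checking that the $(1+o(1))$ factors divide correctly. We also need $M_q(k,n)$ to be nonempty and of the stated size for $q\geq n$. This is guaranteed asymptotically by Lemma~\ref{lem:mds}, and in any case RS codes exist for $q\geq n$.
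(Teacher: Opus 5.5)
Your proposal is correct and follows the paper's proof essentially verbatim: greedy selection inside $\mathcal{M}_q(k,n)$, at most $A_q(k,n,u)+1$ codes removed per step, and the asymptotics taken from Lemma~\ref{lem:mds} and Claim~\ref{clm:A-q-k-n-u}. Your explicit exponent computation and your remark that $A_q(k,n,u)\to\infty$ (so the $+1$ is absorbed into the $(1+o(1))$ factor) fill in details that the paper leaves implicit.
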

\begin{proof}
Set $\mathcal{M}=\emptyset$ and let $\mathcal{T}=\mathcal{M}_q(k,n)$ be the set of remaining codes. At each step, select $\cC\in\mathcal{T}$, add it to $\mathcal{M}$, and remove from $\mathcal{T}$ every code $\cD$ with $\dim(\cC\cap\cD)\geq u+1$. Stop when $\mathcal{T}$ is empty.

Each step removes at most $A_q(k,n,u)+1$ codes, including the selected code. Therefore,
\[
M_q(k,n)\leq|\mathcal{M}|\bigl(A_q(k,n,u)+1\bigr),
\]
which gives the stated finite bound. By construction, any two distinct codes in $\mathcal{M}$ intersect in dimension at most $u$. Finally, Lemma~\ref{lem:mds} and Claim~\ref{clm:A-q-k-n-u} give
\begin{align*}
\frac{M_q(k,n)}{A_q(k,n,u)+1}
&=\frac{q^{k(n-k)}(1+o(1))}{q^{(k-u-1)(n-k+u+1)}(1+o(1))}\\
&=q^{(u+1)(n-2k+u+1)}(1+o(1)).
\end{align*}
\end{proof}

\begin{remark}
The quantity $A_q(k,n,u)$ counts all $k$-dimensional subspaces whose intersection with a fixed subspace has dimension at least $u+1$. 
A smaller upper bound on the number of such subspaces that are MDS codes would improve the lower bound in Proposition~\ref{prop:MDS-lower-bound}.
\end{remark}

We next restrict the greedy argument to RS codes. Let $\operatorname{RS}_q(k,n)$ denote the set of distinct $[n,k]_q$ RS codes and write $R_q(k,n)=|\operatorname{RS}_q(k,n)|$. By Corollary~\ref{RS-count},
\[
R_q(k,n)=\frac{(q-2)!}{(q-n)!}=q^{n-2}(1+o(1)).
\]
The following claim bounds the number of RS codes whose intersection with a fixed RS code has dimension at least two. For fixed $k\geq3$, this bound has smaller asymptotic order than $A_q(k,n,1)$.
\begin{claim}\label{clm:intersecting-rs}
Let $k\geq2$ and $2k-1\leq n\leq q$. Fix an $[n,k]_q$ RS code $C$. The number of $[n,k]_q$ RS codes $D$ with $\dim(C\cap D)\geq2$, including $D=C$, is at most
\[
B_q(k,n):=2(k-1)^{n-2}q^{2k-4}.
\]
\end{claim}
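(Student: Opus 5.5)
The plan is to encode each code $D$ with $\dim(C\cap D)\geq 2$ by a small amount of algebraic data. I will then count the possible data rather than the codes. Write $C=\RS_{n,k}(\underline{x})$ and $D=\RS_{n,k}(\underline{y})$. As in the proof of Theorem~\ref{thm:constr}, if $\dim(C\cap D)\geq2$ then $C\cap D$ contains a nonconstant codeword $w$. Subtracting a multiple of $\underline{1}$, we may assume $w_1=0$. Then $w=(f(x_1),\ldots,f(x_n))=(g(y_1),\ldots,g(y_n))$ with $f,g\in\Fq[x]_{<k}$ both nonconstant and $f(x_1)=0=g(y_1)$.

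By Lemma~\ref{RS-eq}, $D$ depends only on the affine class of $\underline{y}$. I will use the translation to normalise $y_1=0$, so $g(0)=0$, and keep the scaling $Y\mapsto aY$ in reserve. The group acting on the data is $\Fq^*\times\Fq^*$. The first factor rescales the pair $(w,g)$, and so $f$ as well. The second factor replaces $\underline{y}$ by $a\underline{y}$ and $g(Y)$ by $g(Y/a)$, which leaves $D$ unchanged.

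The key counting step is as follows. Once $(f,g)$ is fixed, each $y_i$ with $i\geq2$ is a root of the nonzero polynomial $g(Y)-f(x_i)$, which has degree at most $k-1$. So there are at most $(k-1)$ choices for each remaining coordinate. The number of choices for $f$ with $f(x_1)=0$ is $q^{k-1}$, and for $g$ with $g(0)=0$ it is also $q^{k-1}$. The crude bound is therefore $q^{2k-2}(k-1)^{n-1}$.

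To reach $2(k-1)^{n-2}q^{2k-4}$, I will quotient by the two scalings. For the first, normalise $w$ by requiring a fixed coordinate, or the leading coefficient of $g$, to equal $1$; this saves a factor $q-1$. For the second, use $a$ to fix one more coordinate, say $y_2=1$. This both saves a factor of $q-1$ in the choice of $g$ and removes the root choice for $y_2$, which turns $(k-1)^{n-1}$ into $(k-1)^{n-2}$.

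The main obstacle is making these normalisations legitimate in all cases. Fixing $y_2=1$ is always possible, since $y_2\neq y_1=0$. However, the condition $g(1)=w_2$ then forces the value of $g$ at $1$. One must decide whether to normalise $w_2=1$, which needs $w_2\neq w_1$, or to normalise the leading coefficient of $g$. The degenerate case $w_1=w_2$ for every choice of $w$ has to be handled separately. I expect this case split to be exactly where the factor $2$ in $B_q(k,n)$ comes from.

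The split I would try first:
\begin{itemize}
\item \emph{Case $w_2\neq0$.} Scale so that $w_2=1$. Then $g$ has $g(0)=0$ and $g(1)=1$, giving at most $q^{k-2}$ choices. $f$ has $f(x_1)=0$ and $f(x_2)=1$, giving at most $q^{k-2}$ choices.
\item \emph{Case $w_2=0$.} Then $g(0)=g(1)=0$, so $g$ is determined by $k-2$ free coefficients. I will scale so that its leading coefficient is $1$, or use a third coordinate. This again gives at most $q^{k-2}\cdot q^{k-2}$ choices, up to the degree choices, which I will absorb carefully.
\end{itemize}
Each case contributes at most $q^{2k-4}(k-1)^{n-2}$, so the total is at most $B_q(k,n)$. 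The code $D=C$ is automatically included, taking $g=f$ after the normalisation.
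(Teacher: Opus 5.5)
Your proposal is correct and follows essentially the paper's route. You normalize the evaluation vector of $D$ to $(0,1,y_3,\ldots,y_n)$, shift and scale a nonconstant common codeword, count the interpolating pairs $(f,g)$, and bound each remaining $y_i$ by the at most $k-1$ roots of $g(Y)-f(x_i)$. The only difference is the scalar normalization: the paper makes $f$ monic and gets the factor $2$ from $\sum_{j=0}^{k-2}q^j\le 2q^{k-2}$, whereas your split on $w_2$ works equally well. In your case $w_2=0$ no further scaling is needed, because $f(x_1)=f(x_2)=0$ and $g(0)=g(1)=0$ already leave at most $q^{k-2}$ choices each.
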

\begin{proof}
Fix an evaluation vector $\underline{\alpha}=(\alpha_1,\ldots,\alpha_n)$ for $C$. By Lemma~\ref{RS-eq}, every code $D$ has a unique evaluation vector of the form $\underline{\beta}=(0,1,\beta_3,\ldots,\beta_n)$, obtained by an affine change of its evaluation points.

If $\dim(C\cap D)\geq2$, the intersection contains a nonconstant codeword, so there are nonconstant polynomials $f,g\in\Fq[X]$ of degree at most $k-1$ such that $
f(\alpha_i)=g(\beta_i)$ for every $i\in[n]$.
Subtracting $f(\alpha_1)$ from both polynomials and dividing both by the leading coefficient of $f$, we may assume that $f$ is monic and $f(\alpha_1)=0$. The number of such nonconstant polynomials $f$ is
\[
\sum_{j=0}^{k-2}q^j=\frac{q^{k-1}-1}{q-1}\leq2q^{k-2}.
\]
For each $f$, the polynomial $g$ satisfies $g(0)=0$ and $g(1)=f(\alpha_2)$. These are two independent linear constraints on the $k$ coefficients of $g$, so there are $q^{k-2}$ solutions of degree at most $k-1$, and at most this many nonconstant solutions.

For each such pair of nonconstant polynomials $(f,g)$ and each $i\geq3$, the polynomial $g(X)-f(\alpha_i)$ is nonzero and has degree at most $k-1$. Hence, there are at most $k-1$ choices for $\beta_i$, and at most $(k-1)^{n-2}$ choices for $(\beta_3,\ldots,\beta_n)$. Discarding tuples with repeated coordinates can only decrease this count. Multiplying the bounds proves the claim.
\end{proof}

We can now construct an RS sunflower by greedy selection.
\begin{proposition}\label{prop:RS-lower-bound}
Let $k\geq2$ and $n\geq2k-1$ be fixed integers, and let $q\geq n$ be a prime power. There exists an $(n,k,q;1)$ RS sunflower $\mathcal{S}$ with center $\langle\underline{1}\rangle_{\Fq}$ and
\[
|\mathcal{S}|\geq\frac{1}{2(k-1)^{n-2}}q^{n-2k+2}(1+o(1))
=\Omega_{k,n}(q^{n-2k+2}).
\]
\end{proposition}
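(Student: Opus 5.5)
We run the same greedy selection as in Proposition~\ref{prop:MDS-lower-bound}, but inside $\operatorname{RS}_q(k,n)$ instead of $\mathcal{M}_q(k,n)$. Set $\mathcal{S}=\emptyset$ and $\mathcal{T}=\operatorname{RS}_q(k,n)$. At each step we pick any $C\in\mathcal{T}$ and add it to $\mathcal{S}$. We then delete from $\mathcal{T}$ every RS code $D$ with $\dim(C\cap D)\geq 2$; this includes $C$ itself. We stop when $\mathcal{T}$ is empty.

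Correctness of the family comes first. Every RS code contains $\underline{1}$, since constant polynomials evaluate to multiples of $\underline{1}$. Take two distinct selected codes, and suppose the later one was chosen after the earlier one. The later code survived the deletion step of the earlier one, so their intersection has dimension at most $1$. Because it contains $\underline{1}$, the intersection equals $\langle\underline{1}\rangle_{\Fq}$. So $\mathcal{S}$ is an $(n,k,q;1)$ RS sunflower with center $\langle\underline{1}\rangle_{\Fq}$. Note that $n\geq 2k-1$ and $q\geq n$ guarantee that Lemma~\ref{RS-eq}, Corollary~\ref{RS-count} and Claim~\ref{clm:intersecting-rs} all apply. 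For $k=2$ the deletion removes only $C$, and the bound reduces to the count of all RS codes.

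For the size, Claim~\ref{clm:intersecting-rs} shows that each step deletes at most $B_q(k,n)=2(k-1)^{n-2}q^{2k-4}$ codes, counting $C$. Since the process runs until $\mathcal{T}$ is empty, we get
\[
R_q(k,n)\leq |\mathcal{S}|\, B_q(k,n).
\]
Hence $|\mathcal{S}|\geq R_q(k,n)/B_q(k,n)$. By Corollary~\ref{RS-count}, $R_q(k,n)=\frac{(q-2)!}{(q-n)!}=\prod_{j=2}^{n-1}(q-j)=q^{n-2}(1+o(1))$ for fixed $n$. Dividing gives
\[
|\mathcal{S}|\geq \frac{q^{n-2}(1+o(1))}{2(k-1)^{n-2}q^{2k-4}}=\frac{1}{2(k-1)^{n-2}}\,q^{n-2k+2}(1+o(1)),
\]
which is $\Omega_{k,n}(q^{n-2k+2})$.

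There is no real obstacle, because all the substantive counting is already done in Claim~\ref{clm:intersecting-rs} and Corollary~\ref{RS-count}. The only point needing care is that the greedy bound uses the per-step removal count $B_q(k,n)$ uniformly in the fixed code $C$, which the claim provides.
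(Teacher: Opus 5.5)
Your proposal is correct and follows the paper's proof: the same greedy selection inside $\operatorname{RS}_q(k,n)$, with Claim~\ref{clm:intersecting-rs} bounding each step's deletions by $B_q(k,n)$ and Corollary~\ref{RS-count} giving $R_q(k,n)=q^{n-2}(1+o(1))$, so that $|\mathcal{S}|\geq R_q(k,n)/B_q(k,n)$. Like the paper, you then use that every RS code contains $\underline{1}$ to conclude that each pairwise intersection of dimension at most one equals $\langle\underline{1}\rangle_{\Fq}$.
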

\begin{proof}
Start with all codes in $\operatorname{RS}_q(k,n)$. Repeatedly select a remaining code and discard every remaining code whose intersection with it has dimension at least two. By Claim~\ref{clm:intersecting-rs}, each step discards at most $B_q(k,n)$ codes, including the selected code. Thus, at least $\lceil R_q(k,n)/B_q(k,n)\rceil$ codes are selected. Their pairwise intersections have dimension at most one; since every RS code contains $\langle\underline{1}\rangle_{\Fq}$, each such intersection equals this subspace. Finally,
\[
\frac{R_q(k,n)}{B_q(k,n)}
=\frac{q^{n-2}(1+o(1))}{2(k-1)^{n-2}q^{2k-4}}
=\frac{1}{2(k-1)^{n-2}}q^{n-2k+2}(1+o(1)).
\]
\end{proof}

\subsection{Comparison}

In this subsection, we compare the greedy existence bounds with the recursive construction. Throughout, $k$ and $\ell\geq2k-1$ are fixed and $q\to\infty$.

\paragraph{RS sunflowers.}
The case $k=2$ can be settled exactly. All distinct two-dimensional RS codes form an $(\ell,2,q;1)$ RS sunflower. By Corollary~\ref{RS-count}, its size is
\[
R_q(2,\ell)=\frac{(q-2)!}{(q-\ell)!}=q^{\ell-2}(1+o(1)).
\]
This is the maximum possible size, since it includes every RS code of the given length and dimension. The recursive construction gives the smaller lower bound $\Omega_{\ell}(q^{\lfloor\ell/3\rfloor})$.

Assume now that $k\geq3$. Theorem~\ref{thm:recursive-construction} gives an RS sunflower with $\Omega_{k,\ell}(q^{\lfloor\ell/(2k-1)\rfloor})$ petals. By Proposition~\ref{prop:RS-lower-bound}, with $n=\ell$, the greedy argument gives an RS sunflower with $\Omega_{k,\ell}(q^{\ell-2k+2})$ petals. The two exponents agree when $\ell=2k-1$, where both are equal to $1$. For every $\ell\geq2k$, we have $
\left\lfloor\frac{\ell}{2k-1}\right\rfloor<\ell-2k+2$,
so the greedy bound has the larger exponent.

\paragraph{MDS code families.}
For $k\geq3$, Proposition~\ref{prop:MDS-lower-bound}, with $u=1$ and $n=\ell$, gives a family $\mathcal{M}$ of $[\ell,k]_q$ MDS codes with
\[
|\mathcal{M}|\geq q^{2(\ell-2k+2)}(1+o(1))
\]
and pairwise intersection dimension at most one. Thus, $\mathcal{M}$ is a constant-dimension subspace code with minimum subspace distance at least $2k-2$. Its lower-bound exponent is larger than both exponents obtained in the RS constructions for every $\ell\geq2k-1$. 
These families need not be sunflowers: their pairwise intersections may differ, and they need not have a common one-dimensional center.

\section{Application to random linear network coding}
\subsection{Error Model for Random Linear Network Coding}

In the last section of this paper, we explain how RS sunflowers may be used in the framework of non-coherent random linear network coding. In this model, neither the transmitter nor the receiver is assumed to know the network topology or the linear combinations applied by the intermediate nodes. Packets are vectors over $\mathbb{F}_q$, and intermediate nodes forward random $\mathbb{F}_q$-linear combinations of the packets they receive. We assume throughout this section that there is a single source of information.

Suppose that the source transmits $r$ packets $x_1,\dots,x_r \in \mathbb{F}_q^n$. Collecting these packets as rows gives a matrix
\[
X \in \mathbb{F}_q^{r \times n}.
\]
The receiver collects $N$ packets, forming a matrix
\[
Y \in \mathbb{F}_q^{N \times n}.
\]
After discarding linearly dependent packets, we may assume that $\operatorname{rk}(Y)=N$.
Due to the linear operations performed within the network, the end-to-end input--output relation is modeled by
\[
Y = A X + E,
\]
where $A \in \mathbb{F}_q^{N \times r}$ is an unknown transfer matrix induced by the network, and $E \in \mathbb{F}_q^{N \times n}$ is an error matrix accounting for corrupted packets.

Packet errors may occur on arbitrary network links and propagate through the linear mixing operations. As a result, even a small number of injected errors may affect many received packets. However, since the error matrix $E$ is generated by a limited number of error injections, its rank is typically small. This observation motivates modeling network errors via rank-based and subspace-based metrics.

In the error-free case ($E=0$), and assuming that $A$ has full rank on the transmitted space, we have
\[
\operatorname{rowsp}(Y) = \operatorname{rowsp}(A X) = \operatorname{rowsp}(X),
\]
so the information transmitted through the network is naturally identified with the subspace spanned by the transmitted packets. Consequently, in the non-coherent setting, information is associated not with individual packets, but with the transmitted subspace.

More generally, if
\[
V=\operatorname{rowsp}(X)
\qquad \text{and} \qquad
U=\operatorname{rowsp}(Y),
\]
then $U$ may differ from $V$ in two ways:
\begin{itemize}
  \item Dimensions of $V$ may be lost because of rank deficiency of $A$ (\emph{erasures}), and
  \item Additional dimensions may be introduced by the error matrix $E$ (\emph{errors}).
\end{itemize}

This behavior can be abstracted by the \textbf{operator channel} model, in which both the channel input and output are subspaces of a fixed ambient space $W$ isomorphic to $\mathbb{F}_q^n$.
The received subspace can be written as
\[
U = H_z(V) \oplus E',
\]
where $H_z(V)$ is a $z$-dimensional subspace of $V$, with $z\leq \dim(V)$, modeling erasures, and $E'$ is an error subspace disjoint from $V$ modeling injected errors. The subspace distance treats insertions and deletions of dimensions symmetrically and generalizes the role of the Hamming distance in classical coding theory. This metric underlies the design and analysis of constant-dimension subspace codes and lifted rank-metric codes for error correction in random network coding.
For more details we refer to \cite{koetter2008coding,wachter2013decoding}.

We now specialize this framework to the code families constructed in the previous sections. Let
\[
\mathcal{S}=\{\mathsf{RS}_{\ell,k}(\aaa_i): i\in [s]\}
\]
be an $(\ell,k,q;1)$ RS sunflower with $2k-1\leq \ell\leq q-1$ and center $\langle \underline{1}\rangle_{\Fq}$. Thus, for all distinct $i,j\in [s]$,
\[
\mathsf{RS}_{\ell,k}(\aaa_i)\cap \mathsf{RS}_{\ell,k}(\aaa_j)=\langle \underline{1}\rangle_{\Fq}.
\]
We identify the messages with the codewords of $\mathcal{S}$. We first discuss recovery when no errors occur, and then describe a list-recovery procedure for a simple error model.

\subsection{Error-free case}

The source decides to send $\mathrm{RS}_{\ell,k}(\aaa_i) \in \mathcal{S}$ for some $i \in [s]$. It sends through the network a basis of $\mathrm{RS}_{\ell,k}(\aaa_i)$. In the case where there are no errors and no rank losses, all receivers will receive a basis of $\mathrm{RS}_{\ell,k}(\aaa_i)$ (possibly different from the one sent by the source). In order to recover the $\ell$-tuple $\aaa_i$, one can use the Sidelnikov–Shestakov attack \cite{sidelnikov1992insecurity}. 
For our purposes, the relevant feature of this algorithm is that it reconstructs, in polynomial time, the hidden generalized Reed--Solomon structure from a generator matrix by exploiting algebraic properties of generalized RS codes, especially their minimum-weight codewords.
More precisely, it reconstructs the evaluation points, up to the natural equivalences that define the same RS code.
Therefore, in this case, any receiver writes the received vectors as the rows of a matrix $G'$. Since no errors or rank losses occurred during the communication, this matrix is a generator matrix of a Reed–Solomon code in $\mathcal{S}$. To determine which one it is, it is sufficient to apply the Sidelnikov–Shestakov attack to the generator matrix $G'$, thereby recovering a representative of $\aaa_i$.


\begin{algorithm}[t]
\caption{Recovery of $\aaa_i$ from a received basis of $\mathsf{RS}_{\ell,k}(\aaa_i)$ (no-error case)}
\label{alg:recover_ai_no_error}
\begin{algorithmic}[1]
\Require A set $\mathcal{S}=\{\mathsf{RS}_{\ell,k}(\aaa_1),\dots,\mathsf{RS}_{\ell,k}(\aaa_s)\}$ of RS codes, and
         a collection of received vectors $\{\,{g}'_1,\dots,{g}'_k\,\}\subseteq\mathbb{F}_q^\ell$ forming a basis
         of the transmitted codeword space.
\Ensure The corresponding representative $\ell$-tuple $\aaa_i$ such that the transmitted code is $\mathsf{RS}_{\ell,k}(\aaa_i)$.

\State Form the matrix $G' \in \mathbb{F}_q^{k\times \ell}$ whose rows are the received vectors:
       \[
         G' =
         \begin{pmatrix}
         {g}'_1\\ \vdots\\ {g}'_k
         \end{pmatrix}.
       \]
\Comment{$G'$ is a generator matrix of $\mathsf{RS}_{\ell,k}(\aaa_i)$ in the no-error case}

\State Apply the Sidelnikov--Shestakov attack to $G'$ to recover $\aaa_i$

\State \Return $\aaa_i$.

\end{algorithmic}
\end{algorithm}

\subsection{Error case}


As already mentioned in the previous case, the sender of the network will send a basis $g_1,\ldots,g_k$ of $\mathrm{RS}_{\ell,k}(\aaa_i) \in \mathcal{S}$, for some $i \in [s]$. As some error can occur during the transmission over the network, a receiver may receive 
\[ A G+E, \]
where $A \in \mathrm{GL}(k,q)$ and $E$ is the error matrix.
If $E$ is the zero matrix, then we are in the previous case.

In order to be able to decode quickly, we assume the following:
\begin{center}
    \emph{
    Each receiver receives two vectors $v_1$ and $v_2$ such that $\mathrm{RS}_{\ell,3}(\aaa_i)=\la \underline{1},v_1,v_2 \ra_{\F_q}$.}
\end{center}

In order to make the decoding procedure efficient, we make the following assumption on the received space. We assume that among the received vectors there exist two vectors \(r_a\) and \(r_b\), with \(a\neq b\), such that

$$
\left\langle \underline{1},r_a,r_b\right\rangle_{\F_q}
=
\RS_{\ell,3}(\aaa_i).
$$

In other words, although errors may affect some of the received packets, we assume that the receiver has access to two vectors which, together with the common vector \(\underline{1}\), generate the three-dimensional Reed--Solomon subcode associated with the same evaluation vector \(\aaa_i\) as the transmitted code.

This assumption can be interpreted in terms of the error model \(AG+E\) as requiring that two of the received rows are error-free and that the corresponding linear combinations of the transmitted basis span, together with \(\underline{1}\), the subcode \(\RS_{\ell,3}(\aaa_i)\). Notice that merely requiring two rows of \(E\) to be zero is not sufficient: the corresponding rows must also generate the appropriate two-dimensional complement of \(\langle\underline{1}\rangle_{\F_q}\) inside \(\RS_{\ell,3}(\aaa_i)\).

Our decoding algorithm is based on the fact that it is easy to distinguish a RS code from a random code. This is especially underlined by the behavior of RS codes with respect to the Schur product.

\begin{definition}
Let $C_1$ and $C_2$ be two linear Hamming-metric code in $\Fq^\ell$.
The \textbf{Schur product}  of two linear codes 
$C_1, C_2 \subseteq \mathbb{F}_q^\ell$ is defined as
\[
C_1 * C_2 = \langle c_1 \circ c_2 : c_1 \in C_1,\; c_2 \in C_2 \rangle_{\F_q},
\]
where \(\circ\) is the coordinate-wise product
\[
(x_1,\dots,x_n) \circ (y_1,\dots,y_n) = (x_1 y_1,\dots,x_n y_n).
\]
If $C_1=C_2=C$, we denote $C^2$ the Schur product of a linear $C$ with itself and we call it the \textbf{square code}.
\end{definition}

Studying this product is useful because it reveals structural properties of codes: random codes and highly structured codes (such as GRS codes) behave very differently under the Schur product. This makes it a powerful tool in code-based cryptography and in distinguishing code families.

We list some properties of the Schur product which will be useful later.

\begin{proposition}(see \cite[Propositions 4 and 5]{couvreur2014distinguisher})
    Let $C_1$ and $C_2$ be two linear Hamming-metric codes in $\F_q^\ell$ of dimension $k_1$ and $k_2$ respectively, then:
    \begin{itemize}
        \item $\dim(C_1 * C_2)\leq \dim(C_1)\dim(C_2)$;
        \item $\dim (C_1^2)\leq {k_1+1 \choose 2}$;
        \item The complexity of the computation of a basis of $C_1^2$ is $O(k_1^2\ell^2)$ operations in $\F_q$.
    \end{itemize}
\end{proposition}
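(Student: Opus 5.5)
We prove the three items in order, working with bases. Let $\{c_1,\ldots,c_{k_1}\}$ be a basis of $C_1$ and $\{d_1,\ldots,d_{k_2}\}$ a basis of $C_2$.

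\emph{First item.} The coordinatewise product $\circ$ is $\F_q$-bilinear. Write $c=\sum_i a_i c_i\in C_1$ and $d=\sum_j b_j d_j\in C_2$. Then
\[
c\circ d=\sum_{i,j} a_i b_j\,(c_i\circ d_j).
\]
So every generator $c\circ d$ of $C_1*C_2$ lies in the span of the $k_1k_2$ vectors $c_i\circ d_j$. Hence $C_1*C_2=\langle c_i\circ d_j : i\in[k_1],\, j\in[k_2]\rangle_{\F_q}$, and $\dim(C_1*C_2)\le k_1k_2$.

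\emph{Second item.} Take $C_2=C_1$ with the same basis. Because $\circ$ is commutative, $c_i\circ c_j=c_j\circ c_i$. The spanning set $\{c_i\circ c_j\}$ therefore reduces to the products indexed by unordered pairs $i\le j$. There are $\binom{k_1}{2}+k_1=\binom{k_1+1}{2}$ such pairs, which gives $\dim(C_1^2)\le\binom{k_1+1}{2}$.

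\emph{Third item.} Computing a basis of $C_1^2$ has two stages.
\begin{enumerate}
\item Form the $\binom{k_1+1}{2}$ generators $c_i\circ c_j$ with $i\le j$. Each costs $\ell$ multiplications, so this stage costs $O(k_1^2\ell)$ operations.
\item Extract a basis by Gaussian elimination on the $\binom{k_1+1}{2}\times\ell$ matrix whose rows are these generators. Eliminating an $m\times\ell$ matrix of rank $r$ costs $O(m\ell r)$ operations. Here $m=O(k_1^2)$ and $r\le\min(m,\ell)\le\ell$, so this stage costs $O(k_1^2\ell^2)$.
\end{enumerate}
The elimination dominates, giving the stated total of $O(k_1^2\ell^2)$ operations in $\F_q$.

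The only step needing care is this complexity count. To get $O(k_1^2\ell^2)$ rather than a cubic bound in $k_1^2$, we must bound the rank by $\ell$. The first two items follow directly from bilinearity and commutativity.
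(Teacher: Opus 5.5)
Your proof is correct: bilinearity and commutativity of $\circ$ give spanning sets of sizes $k_1k_2$ and $\binom{k_1+1}{2}$. Forming those products and then running Gaussian elimination on a $\binom{k_1+1}{2}\times\ell$ matrix, whose rank is at most $\ell$, costs $O(k_1^2\ell^2)$. The paper gives no proof of its own and simply cites Couvreur et al.; your argument is essentially the standard one behind that reference.
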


Generalized RS codes, and so RS codes, have a relatively small square code. Indeed, as it has been observed in the cryptanalytic setting described in these papers \cite{couvreur2014distinguisher,marquez2013non,wieschebrink2010cryptanalysis}, the following holds.

\begin{proposition}(see e.g. \cite[Proposition 6]{couvreur2014distinguisher})\label{prop:squareRS}
    For $k \leq (\ell+1)/2$,  we have
    \[ \mathrm{RS}_{\ell, k}(\aaa)^2=\mathrm{RS}_{\ell, 2k-1}(\aaa). \]
\end{proposition}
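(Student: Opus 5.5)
The plan is to prove the two inclusions $\mathrm{RS}_{\ell,k}(\aaa)^2\subseteq \mathrm{RS}_{\ell,2k-1}(\aaa)$ and $\mathrm{RS}_{\ell,2k-1}(\aaa)\subseteq\mathrm{RS}_{\ell,k}(\aaa)^2$ directly from Definition~\ref{def:RS-code}, working with polynomials. The hypothesis $k\le(\ell+1)/2$, i.e.\ $2k-1\le\ell$, is needed only so that $\mathrm{RS}_{\ell,2k-1}(\aaa)$ is defined as a code of length $\ell$ and dimension $2k-1$. That is, the evaluation map on $\Fq[x]_{<2k-1}$ is injective because $\aaa$ has $\ell\ge 2k-1$ distinct entries.

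For the first inclusion, take generators $c_1=(f(\alpha_1),\dots,f(\alpha_\ell))$ and $c_2=(g(\alpha_1),\dots,g(\alpha_\ell))$ of $\mathrm{RS}_{\ell,k}(\aaa)$ with $\deg f,\deg g\le k-1$. Then
\[
c_1\circ c_2=\bigl((fg)(\alpha_1),\dots,(fg)(\alpha_\ell)\bigr),
\]
and $\deg(fg)\le 2k-2$, so $c_1\circ c_2\in\mathrm{RS}_{\ell,2k-1}(\aaa)$. Since the latter is a linear space and the Schur square is by definition the span of such products, the inclusion follows.

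For the reverse inclusion, it suffices to show that each basis vector of $\mathrm{RS}_{\ell,2k-1}(\aaa)$ lies in the square. These are the evaluations of the monomials $x^m$ for $0\le m\le 2k-2$. Write $m=a+b$ with $0\le a,b\le k-1$; for instance, take $a=\min(m,k-1)$ and $b=m-a\le k-1$. Then the evaluation vector of $x^m$ equals the coordinatewise product of the evaluation vectors of $x^a$ and $x^b$, both of which lie in $\mathrm{RS}_{\ell,k}(\aaa)$. Hence all $2k-1$ basis vectors lie in $\mathrm{RS}_{\ell,k}(\aaa)^2$, and this gives equality.

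There is no real obstacle here. The only point needing care is the dimension bookkeeping: the condition $2k-1\le \ell$ guarantees that $\mathrm{RS}_{\ell,2k-1}(\aaa)$ has dimension exactly $2k-1$. This in turn shows that $\dim \mathrm{RS}_{\ell,k}(\aaa)^2=2k-1$, which is much smaller than the generic value $\binom{k+1}{2}$ and is the distinguishing feature used in the sequel.
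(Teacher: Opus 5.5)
Your two-inclusion argument is correct and is the standard proof of the cited result. The paper gives no proof of its own and only refers to Couvreur et al., whose GRS version is exactly this product-of-polynomials computation with the column multipliers squared. One small correction to your closing remark: for $k=2$ one has $2k-1=\binom{k+1}{2}=3$, so the square is strictly smaller than the generic dimension only for $k\ge 3$ — which is why the paper's Schur-square test uses three-dimensional codes ($5$ versus $6$).
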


Actually, the previous result holds for generalized RS, but we formulate it directly in the form we will use.

We have all the tools to write the proposed decoding algorithm, see Algorithm~\ref{alg:decoding}. 

\begin{algorithm}
\caption{List decoding algorithm for the subspace code $\mathcal{S}$}
\label{alg:decoding}
\begin{algorithmic}
\Require A set $\mathcal{S}=\{\mathsf{RS}_{\ell,k}(\aaa_1),\dots,
\mathsf{RS}_{\ell,k}(\aaa_s)\}$ of RS codes, and
         a collection of received vectors
         $\{r_1,\dots,r_k\}\subseteq\mathbb{F}_q^\ell$.
\Ensure A list of $\ell$-tuples containing $\aaa_i$ such that the transmitted
code is $\mathsf{RS}_{\ell,k}(\aaa_i)$.

\State $\mathcal{L}\gets\emptyset$

\State Compute the square product $C^2$ of
$C=\la r_1,\ldots,r_k\ra_{\F_q}$

\If{$\dim(C^2)=2k-1$}
    \State Use the Sidelnikov--Shestakov attack on $C$ to recover
    $\aaa$ from $C=\mathrm{RS}_{\ell,k}(\aaa)$

    \If{the attack succeeded and $\aaa$ is equivalent to one of
    $\{\aaa_1,\ldots,\aaa_s\}$}
        \State {$\mathcal{L}\gets
        \mathcal{L}\cup\{\aaa\}$}
    \EndIf
\EndIf

\For{$1\leq i<j\leq k$}
    \State Compute
    $C'=\la \underline{1},r_i,r_j\ra_{\F_q}$

    \If{$\dim(C')=3$ and $\dim((C')^2)=5$}
        \State Use the Sidelnikov--Shestakov attack on $C'$ to recover
        $\aaa$ from
        $C'=\mathrm{RS}_{\ell,3}(\aaa)$

        \If{the attack succeeded and $\aaa$ is equivalent to one of
        $\{\aaa_1,\ldots,\aaa_s\}$}
            \State $\mathcal{L}\gets
            \mathcal{L}\cup\{\aaa\}$
        \EndIf
    \EndIf
\EndFor

\State \Return $\mathcal{L}$

\end{algorithmic}
\end{algorithm}



Algorithm \ref{alg:decoding} can give more than one output. This is due to the fact that in a RS codes there might be different three-dimensional RS codes.
For instance, if the receiver receives 
\[ r_1=\underline{1},r_2=(g_1,\ldots,g_\ell), r_3=(g_1^2,\ldots,g_\ell^2), r_4=(g_1^3,\ldots,g_\ell^3), r_5=(g_1^4,\ldots,g_\ell^4), \ldots
\]
Algorithm \ref{alg:decoding} will output both $(g_1,\ldots,g_\ell)$ and $(g_1^2,\ldots,g_\ell^2)$, as both $\mathsf{RS}_{\ell,3}((g_1,\ldots,g_\ell))$ and $\mathsf{RS}_{\ell,3}((g_1^2,\ldots,g_\ell^2))$ are contained in $\mathsf{RS}_{\ell,k}((g_1,\ldots,g_\ell))$.
Hence, the success of the decoding algorithm is related to how often it happens that given two vectors $r_i,r_j \in \Fq^\ell$ the code $C'=\la \underline{1},r_i,r_j\ra_{\F_q}$ is a three-dimensional RS code.
If this happens \emph{few} times the decoding algorithm will produce a small list of evaluation points that need to be checked in order to find the right one.

To this aim, we need to study the Schur square of a random code of dimension three. Known results, see e.g. \cite{faugere2013distinguisher}, assert that a random code of dimension $k$ has Schur-square dimension equal to ${k+1 \choose 2}$ with very high probability for large values of $k$.
We need to control this condition in the case of dimension-three codes.

\begin{remark}
Algorithm \ref{alg:decoding} runs in polynomial time with respect to $\ell$.
\end{remark}

\subsubsection{The square code of a three-dimensional random code}

In this subsection, we bound the probability that the Schur square of a random three-dimensional code has dimension less than $6$, and show that this probability decreases as $q$ grows.
These estimates can be viewed as a refinement, in our special case, of the results in \cite{cascudo2015squares}.

Let us fix a linear Hamming-metric code $C \subseteq \Fq^\ell$ of dimension $3$ and consider an its generator matrix $G$.
Denote by $\pi_1,\ldots,\pi_\ell \in \Fq^3$ the columns of $G$.
Define the following map
\[ \text{ev}_C \colon Q \in \text{Quad}(\Fq^3) \mapsto (Q(\pi_1),\ldots,Q(\pi_\ell))\in \Fq^\ell, \]
where $\text{Quad}(\Fq^3)$ is the $\Fq$-vector space of quadratic forms over $\Fq$ in three variables.
Observe that $\text{ev}_C$ is an $\Fq$-linear map,
\[ \mathrm{Im}(\textrm{ev}_C)=C^{(2)}\,\,\text{and}\,\, \dim(\text{Quad}(\Fq^3))=6. \]

Therefore, we have the following.

\begin{proposition}
    Let $C \subseteq \Fq^\ell$ be a linear Hamming-metric code of dimension $3$. 
    Then $\dim(C^{(2)})=6$ if and only if $\dim(\ker(\text{ev}_C))=0$.
\end{proposition}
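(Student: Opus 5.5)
The plan is to apply rank--nullity to the linear map $\mathrm{ev}_C$. First I will confirm that $\mathrm{ev}_C$ is $\Fq$-linear from $\mathrm{Quad}(\Fq^3)$ to $\Fq^\ell$. This holds because evaluating a quadratic form at a fixed point $\pi_j$ is linear in the coefficients of the form.

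Next I will check that $\mathrm{Im}(\mathrm{ev}_C)=C^{(2)}$, reading $C^{(2)}$ as the square code $C^2=C*C$. The codewords of $C$ are exactly the vectors $(L(\pi_1),\ldots,L(\pi_\ell))$ as $L$ ranges over linear forms on $\Fq^3$: the codeword $mG$ is given by $L(x)=m\cdot x$. The coordinatewise product of the codewords attached to $L$ and $L'$ is then $\mathrm{ev}_C(LL')$. Since $C^2$ is spanned by such products, and the products $LL'$ (for instance $x_ix_j$ with $i\le j$) span $\mathrm{Quad}(\Fq^3)$, the two spans coincide.

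With this in hand, rank--nullity gives
\[
\dim(C^{(2)})=\dim\mathrm{Im}(\mathrm{ev}_C)=\dim \mathrm{Quad}(\Fq^3)-\dim\ker(\mathrm{ev}_C)=6-\dim\ker(\mathrm{ev}_C).
\]
Therefore $\dim(C^{(2)})=6$ holds if and only if $\ker(\mathrm{ev}_C)=0$.

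The only point needing care is the spanning claim $\mathrm{Im}(\mathrm{ev}_C)=C^{(2)}$, which the paper takes as observed. It rests on two facts: the products $x_ix_j$ span the space of quadratic forms, and the rows of $G$ correspond to the coordinate linear forms $x_1,x_2,x_3$. This step is elementary, and no real obstacle is expected.
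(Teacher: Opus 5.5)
Your proposal is correct and follows the paper's own argument: the paper notes that $\mathrm{ev}_C$ is $\Fq$-linear with $\mathrm{Im}(\mathrm{ev}_C)=C^{(2)}$ and $\dim \mathrm{Quad}(\Fq^3)=6$, and deduces the equivalence by rank--nullity. Your identification of codewords with evaluations of linear forms, and of $C^{(2)}$ with the span of evaluations of their products, supplies the justification of the image identity that the paper only asserts.
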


We now estimate the probability that the Schur square of a random three-dimensional code has dimension less than $6$.

\begin{theorem}
Let $C\subseteq \mathbb{F}_q^\ell$ be a random three-dimensional code. Then 
\[
\Pr\bigl(\dim(C^{(2)})<6\bigr)
\leq
\frac{q^6-1}{q-1}
\left(\frac{2}{q}\right)^\ell (1 + o(1)).
\]
In particular, for fixed $\ell$,
\[
\Pr\bigl(\dim(C^{(2)})<6\bigr)
\leq
2^\ell q^{5-\ell}(1+o(1)).
\]
\end{theorem}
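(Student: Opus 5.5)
By the preceding proposition, $\dim(C^{(2)})<6$ exactly when some nonzero quadratic form $Q\in\mathrm{Quad}(\Fq^3)$ vanishes at every column $\pi_1,\ldots,\pi_\ell$ of a generator matrix $G$ of $C$. The event therefore depends only on the column configuration, and I will model a random three-dimensional code by a generator matrix $G\in\Fq^{3\times\ell}$ whose columns are independent and uniform in $\Fq^3$. Conditioning on $\mathrm{rk}(G)=3$ changes probabilities only by a factor $1+O(q^{3-\ell})=1+o(1)$, which is absorbed into the $(1+o(1))$ of the statement. Since the vanishing condition is invariant under rescaling $Q$ by nonzero scalars, I will union bound over projective classes $[Q]$. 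There are exactly $(q^6-1)/(q-1)$ such classes, and this explains the first factor in the bound.

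\textbf{Key step.} For a fixed nonzero quadratic form $Q$ in three variables, I will bound
\[
N_Q:=\#\{\pi\in\Fq^3 : Q(\pi)=0\}\le 2q^2-q .
\]
Because the columns are independent, this gives $\Pr\bigl(Q(\pi_i)=0\ \forall i\bigr)=(N_Q/q^3)^\ell\le (2/q)^\ell$. I will obtain the bound on $N_Q$ from the classification of conics in $\mathrm{PG}(2,q)$, treating the zero vector separately:
\begin{itemize}
\item a nondegenerate conic has $q+1$ projective points, so $N_Q=1+(q+1)(q-1)=q^2$;
\item a pair of distinct lines has $2q+1$ points, so $N_Q=2q^2-q$;
\item a double line has $q+1$ points, so $N_Q=q^2$;
\item a single point or conjugate line pair gives $N_Q=q$.
\end{itemize}
In every case $N_Q\le 2q^2-q<2q^2$. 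Alternatively, Schwartz–Zippel (with total degree $2$) gives $N_Q\le 2q^2$ directly, which already suffices for the $(2/q)^\ell$ factor. I will use that route to avoid case analysis in characteristic $2$.

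\textbf{Conclusion.} Summing over the $(q^6-1)/(q-1)$ projective classes gives
\[
\Pr\bigl(\dim(C^{(2)})<6\bigr)\le \frac{q^6-1}{q-1}\Bigl(\frac{2}{q}\Bigr)^\ell(1+o(1)),
\]
where the $(1+o(1))$ comes from the full-rank conditioning. For fixed $\ell$, we have $(q^6-1)/(q-1)=q^5(1+o(1))$, which yields the bound $2^\ell q^{5-\ell}(1+o(1))$.

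\textbf{Main obstacle.} The delicate point is fixing a model of ``random code'' compatible with the column-independence computation. If instead $C$ is uniform in $\cG_q(3,\ell)$, I will note that a uniformly random full-rank $G$ induces the uniform distribution on $\cG_q(3,\ell)$, because every subspace has the same number $|\mathrm{GL}(3,q)|$ of generator matrices. Hence the conditional probability equals the unconditional one divided by $\Pr(\mathrm{rk}\,G=3)=1-O(q^{2-\ell})$.

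The rank failure probability is of order $q^{2-\ell}$ (it is at most $\tfrac{q^3-1}{q-1}q^{-\ell}\approx q^{2-\ell}$, the chance that all columns lie in a common plane). This is negligible as $q\to\infty$ for $\ell\ge3$. The statement is trivial anyway unless $\ell\ge 6$, since otherwise $\dim(C^{(2)})\le\ell<6$ always and the right-hand side exceeds $1$.
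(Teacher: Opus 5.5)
Your proposal is correct and follows the paper's proof step for step. The steps match: the reduction to a nonzero quadratic form vanishing on all columns, the Schwartz--Zippel bound of $2q^2$ zeros, independence of the columns with the correction factor $1/\prod_{j=0}^{2}(1-q^{j-\ell})=1+o(1)$ for conditioning on full rank, and the union bound over the $(q^6-1)/(q-1)$ projective classes. One wording fix: for the bad event $E$, conditioning gives only $\Pr(E\mid \mathrm{rk}\,G=3)\le \Pr(E)/\Pr(\mathrm{rk}\,G=3)$, not equality. A rank-deficient $G$ always lies in $E$, because its columns sit in a plane and the square of a linear form vanishing on that plane is a nonzero quadratic form vanishing on every column. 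The inequality is all your bound needs.
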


\begin{proof}
Let $G$ be a generator matrix of $C$, and denote its columns by $\pi_1,\ldots,\pi_\ell\in\mathbb{F}_q^3$.
Recall that $C^{(2)}$ is the image of the evaluation map
\[
\operatorname{ev}_C:\operatorname{Quad}(\mathbb{F}_q^3)\longrightarrow
\mathbb{F}_q^\ell,
\qquad
Q\longmapsto (Q(\pi_1),\ldots,Q(\pi_\ell)).
\]
Hence $\dim(C^{(2)})<6$ if and only if there exists a nonzero quadratic form
$Q\in\operatorname{Quad}(\mathbb{F}_q^3)$ such that $Q(\pi_i)=0$ for every $i\in [\ell]$.

Fix a nonzero quadratic form $Q$. By the Schwartz--Zippel bound, or simply by
the fact that a nonzero polynomial of degree $2$ in three variables has at most
$2q^2$ zeros in $\mathbb{F}_q^3$, we have
\[
\Pr(Q(X)=0)
=
\frac{|\{X\in\mathbb{F}_q^3:Q(X)=0\}|}{q^3}
\leq
\frac{2q^2}{q^3}
=
\frac{2}{q}.
\]
Therefore, assuming the columns are chosen independently,

\begin{align*}
\Pr(Q(\pi_i)=0,\; \forall i\in [\ell] \mid  \dim(\textup{Span}(\pi_1, \ldots,\pi_{\ell}))=3)
&\leq \frac{\Pr(Q(\pi_i)=0,\; \forall i\in [\ell])}{\Pr(\dim(\textup{Span}(\pi_1, \ldots,\pi_{\ell}))=3))}
 \\
&\leq \frac{\left(\frac{2}{q}\right)^\ell}{\prod_{j=0}^2 (1 - q^{j-\ell})}\\
& = \left( \frac{2}{q} \right)^{\ell} (1 + o(1))\;.
\end{align*}

Now we take a union bound over all nonzero quadratic forms, up to scalar
multiplication. Since the size of quadratic forms over $\F_q^3$, up to a scalar multiplication, are $\frac{q^6-1}{q-1}$, we obtain
\[
\Pr\bigl(\dim(C^{(2)})<6\bigr)
\leq
\frac{q^6-1}{q-1}
\left(\frac{2}{q}\right)^\ell (1 + o(1)).
\]
This proves the claimed bound.
\end{proof}

Therefore, we can estimate the probability that a random code is a RS code.

\begin{corollary} \label{cor-prob}
    The probability that a random code $C$ of dimension 3 in $\Fq^\ell$ is an RS code is at most $$\frac{q^6-1}{q-1}
\left(\frac{2}{q}\right)^\ell (1 + o(1)).$$
\end{corollary}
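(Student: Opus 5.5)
The plan is to reduce the corollary to the preceding theorem via \Cref{prop:squareRS}. The argument is an event inclusion followed by monotonicity of probability.

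First, I would show that being an RS code forces a small Schur square. Let $C\subseteq\Fq^\ell$ be a three-dimensional code that equals $\mathsf{RS}_{\ell,3}(\aaa)$ for some evaluation vector $\aaa$ with pairwise distinct entries. Since the paper works throughout in the regime $\ell\geq 2k-1=5$, the hypothesis $k\leq(\ell+1)/2$ of \Cref{prop:squareRS} holds with $k=3$. That proposition then gives $C^{(2)}=\mathsf{RS}_{\ell,5}(\aaa)$, so
\[
\dim(C^{(2)})\leq 5<6.
\]
In fact $\dim(C^{(2)})=5$, because $\ell\geq 5$ and the entries of $\aaa$ are distinct. The only point to check is that the Schur square $C^{(2)}$ in the theorem is the same object as $C^2$ in \Cref{prop:squareRS}. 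This holds by definition: both are spanned by the coordinatewise products of codewords.

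Second, I would combine this with the theorem. The first step shows that the event ``$C$ is an RS code'' is contained in the event $\{\dim(C^{(2)})<6\}$, under the same random model for $C$ (a random generator matrix with independent columns, conditioned on rank $3$). Monotonicity of probability and the bound in the preceding theorem then give
\[
\Pr(C \text{ is RS})\leq \Pr\bigl(\dim(C^{(2)})<6\bigr)\leq \frac{q^6-1}{q-1}\left(\frac{2}{q}\right)^{\ell}(1+o(1)).
\]

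The main obstacle is small: the statement must be read in the regime $\ell\geq5$, where \Cref{prop:squareRS} applies. For $\ell<5$ the square of an RS code of dimension $3$ can have dimension $\min(\ell,5)$, which is still less than $6$. So the inclusion of events still holds whenever $\ell\geq 3$, and I would note this explicitly to cover all $\ell$.
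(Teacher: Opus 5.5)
Your proposal is correct and follows the paper's own argument. Proposition~\ref{prop:squareRS} gives $\dim(C^{(2)})=5<6$ for every three-dimensional RS code, so the event that $C$ is RS lies inside $\{\dim(C^{(2)})<6\}$, and the preceding theorem bounds that event; your remark that the inclusion also holds trivially for $\ell<5$ (since $\dim(C^{(2)})\le\ell$) is a harmless addition the paper leaves implicit.
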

\begin{proof}
    If $C$ is a RS code of dimension three, then $\dim(C^{(2)})=5$ by Proposition \ref{prop:squareRS}. Hence, the probability that $C$ is a RS code is upper bounded by the probability that the code $C$ has Schur-square dimension strictly less than $6$.
    The assertion now follows from the previous theorem.
\end{proof}

Thus, a random three-dimensional candidate is unlikely to pass the Schur-square test when $q$ is large and $\ell \geq 6$ is fixed. This supports the expectation that Algorithm \ref{alg:decoding} typically produces a short list, although a rigorous list-size bound for the actual candidates tested by the algorithm would require a separate analysis.

With the counting of RS codes we can immediately compute the probability that a random $3$-dimensional subspace of $\mathbb{F}_q^\ell$, with $\ell \geq 3$, is a RS code, by dividing by the $q$-binomial coefficient $\qbin{\ell}{3}{q}$, the total number of three-dimensional subspaces of $\mathbb{F}_q^\ell$.

\begin{corollary}
The probability that a random code $C$ of dimension $3$ in $\mathbb{F}_q^\ell$ is a RS code is 

$$\frac{(q-2)!}{(q-\ell)!\qbin{\ell}{3}{q}} \sim \bigg(\frac{1}{q}\bigg)^{2\ell-7},$$
where $\qbin{\ell}{3}{q}=\frac{(q^\ell-1)(q^\ell-q) \ldots (q^\ell-q^2)}{(q^3-1)(q^3-q)\ldots(q^3-q^2)}$.
\end{corollary}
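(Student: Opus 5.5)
The probability is the exact ratio of the number of three-dimensional RS codes in $\mathbb{F}_q^\ell$ to the number of all three-dimensional subspaces of $\mathbb{F}_q^\ell$, so the plan has two parts: identify numerator and denominator, then compare their orders in $q$.

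\textbf{Numerator.} For $k=3$, Corollary~\ref{RS-count} needs $2(k-1)=4<\ell\le q$, i.e.\ $\ell\ge 5$. In that range the number of distinct $[\ell,3]_q$ RS codes is exactly $(q-2)!/(q-\ell)!$. For the smaller cases $\ell\in\{3,4\}$ I would note that the formula is only claimed in the range covered by Lemma~\ref{RS-eq}. I would therefore assume $\ell\ge 5$ when stating the equality, and remark that for $\ell=3$ the unique RS code is the whole space, so the stated formula does not literally apply there.

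\textbf{Denominator.} A uniformly random three-dimensional code is a uniform element of $\mathcal{G}_q(3,\ell)$, which has size $\qbin{\ell}{3}{q}$. Dividing gives the exact expression in the statement.

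\textbf{Asymptotics.} As $q\to\infty$ with $\ell$ fixed:
\begin{itemize}
\item the numerator satisfies $(q-2)!/(q-\ell)!=\prod_{j=2}^{\ell-1}(q-j)=q^{\ell-2}(1+o(1))$;
\item the denominator satisfies $\qbin{\ell}{3}{q}=q^{3(\ell-3)}(1+o(1))$, by the estimate $\qbin{a}{b}{q}=q^{b(a-b)}(1+o(1))$ already used in the proof of Claim~\ref{clm:A-q-k-n-u}.
\end{itemize}
The ratio is therefore $q^{\ell-2-3\ell+9}(1+o(1))=q^{-(2\ell-7)}(1+o(1))$, which gives the stated asymptotic equivalence.

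The only real obstacle is the range of validity of Corollary~\ref{RS-count}. The asymptotic part is routine once the two leading terms are identified.
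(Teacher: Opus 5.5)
Your proposal is correct and is the paper's argument: divide the count $(q-2)!/(q-\ell)!$ from Corollary~\ref{RS-count} by $\qbin{\ell}{3}{q}$, then compare the leading terms $q^{\ell-2}$ and $q^{3(\ell-3)}$. Your restriction to $5\le\ell\le q$ is also right and more careful than the paper's ``$\ell\geq 3$'': Corollary~\ref{RS-count} needs $\ell>2(k-1)=4$, and for $\ell=3$ the probability is $1$, not the formula's value $q-2$ (for $\ell=4$, evaluation vectors that are not affinely related can give the same $[4,3]_q$ code, so the count can differ there too).
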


\section{Conclusion and open questions}
In this paper we introduced RS sunflowers, namely subspace sunflowers whose petals are RS codes. We focused mainly on the natural one-dimensional center generated by the all-one vector and translated the sunflower condition into an explicit algebraic rank condition involving generalized $V$-matrices. This gives a concrete criterion for constructing families of RS codes with prescribed pairwise intersection and, equivalently, constant-dimension subspace codes with minimum distance $2k-2$.

We obtained a complete description in the case $k=2$, where any two distinct RS codes intersect exactly in the center, and the sunflower size is determined by counting RS codes up to affine equivalence of the evaluation vectors. For larger dimensions, we gave an explicit recursive construction based on the generalized $V$-matrix criterion and derived lower bounds on the number of petals it produces. We also gave greedy existence bounds for MDS code families and RS sunflowers. For fixed $k\geq3$ and $\ell\geq2k-1$, these arguments give exponents $2(\ell-2k+2)$ and $\ell-2k+2$, respectively. The larger MDS bound concerns families whose pairwise intersections have dimension at most one, while the RS families have the common center $\langle\underline{1}\rangle_{\Fq}$.

Finally, we discussed an application to random linear network coding. The behavior of RS codes under the Schur product provides a way to recognize candidate petals and leads to a list-recovery procedure. 

We conclude with several directions for future research.
\begin{enumerate}
    \item \textbf{Extremal size of RS sunflowers.} Determine, or sharply estimate, the maximum possible size of an $(\ell,k,q;1)$ RS sunflower for $k\geq 3$. In particular, can the greedy lower bound $\Omega_{k,\ell}(q^{\ell-2k+2})$ be improved, or can a matching upper bound be proved?

    \item \textbf{Sharper explicit constructions.} Improve the recursive construction by finding larger sets of evaluation vectors satisfying the generalized $V$-matrix rank condition. One goal is to match the exponent of the greedy RS bound within the recursive construction.

    \item \textbf{Other centers and higher-dimensional intersections.} Extend the theory beyond the center $\langle\underline{1}\rangle_{\Fq}$. Natural questions include the construction and classification of RS sunflowers with higher-dimensional centers, as well as families whose intersections contain prescribed RS subcodes.

    \item \textbf{Further applications.} Random linear network coding gives a natural first application of RS sunflowers, but their combination of large distance, explicit algebraic structure, and recognizability may be useful in other settings as well. It would be interesting to identify further coding-theoretic or combinatorial applications where this additional structure can be exploited.
\end{enumerate}

\section*{Acknowledgments}
The last author is very grateful for the hospitality of the Algebra group at DTU, he was visiting DTU during the development of this research in August 2025.
This work was supported by a research grant (VIL”52303”) from Villum Fonden.
The research of the last author was partially supported by the Italian National Group for Algebraic and Geometric Structures and their Applications (GNSAGA - INdAM).

\newpage
\bibliographystyle{abbrv}
\bibliography{biblio.bib}

\end{document}